\newtheorem{theorem}{Theorem}[section]
\newtheorem{thm}[theorem]{Theorem}
\newtheorem*{thm*}{Theorem}
\newtheorem{lem}[theorem]{Lemma}
\newtheorem{cor}[theorem]{Corollary}
\newtheorem*{cor*}{Corollary}
\newtheorem{prop}[theorem]{Proposition}
\newtheorem*{thmA}{Theorem A}
\theoremstyle{definition}
\newtheorem{example}[theorem]{Example}
\newtheorem*{conj*}{Conjecture}
\newtheorem{conj}{Conjecture}
\newtheorem{remark}[theorem]{Remark}
\numberwithin{equation}{section}
\newcommand{\vol}[1]{\text{vol}\left(#1\right)}
\newcommand{\lh}[1]{\mathcal{L}\left(#1\right)}
\newcommand{\intl}[1]{\textsf{int}\left(#1\right)}
\newcommand{\snk}{\text{sn}_{\kappa}}
\newcommand{\diam}{\text{diam}}
\newcommand{\Alex}{\text{Alex\,}}
\newcommand{\Alexnk}{\text{Alex}^n(\kappa)}
\newcommand{\dsp}{\displaystyle}
\newcommand{\geod}[1]{[\,#1\,]}
\newcommand{\geodii}[1]{]#1[\,}
\newcommand{\geodci}[1]{[\,#1[\,}
\newcommand{\geodic}[1]{\,]#1\,]}
\newcommand{\ang}[3]{\measuredangle\left({#1}\,_{#3}^{#2}\right)}
\newcommand{\tang}[4]{\tilde\measuredangle_{#1}\left({#2}\,_{#4}^{#3}\right)}
\newcommand{\drn}{\uparrow}
\newcommand{\scap}[2]{\underset{#1}{\overset{#2}\cap}}
\newcommand{\scup}[2]{\underset{#1}{\overset{#2}\cup}}
\newcommand{\dsju}[2]{\underset{#1}{\overset{#2}\amalg}}
\newcommand{\ssum}[2]{\underset{#1}{\overset{#2}\Sigma}\,}
\newcommand{\reg}[1]{{#1}^{\text{\,Reg}}}
\begin{document}



\title{Lipschitz-Volume rigidity in Alexandrov geometry}

\author{Nan Li}
\address{Department of Mathematics, The Penn State University, State College, PA 16802}
\email{lilinanan@gmail.com}
\thanks{The author was partially supported by the research funds managed by Penn State University.}



\date{\today}



\begin{abstract}
  We prove a Lipschitz-Volume rigidity theorem in Alexandrov geometry, that is, if a 1-Lipschitz map $f\colon  X=\amalg X_\ell\to Y$ between Alexandrov spaces preserves volume, then it is a path isometry and an isometry when restricted to the interior of $X$. We furthermore characterize the metric structure on $Y$ with respect to $X$ when $f$ is also onto. This implies the converse of Petrunin's Gluing Theorem: if a gluing of two Alexandrov spaces via a bijection between their boundaries produces an Alexandrov space, then the bijection must be an isometry.
\end{abstract}

\maketitle

\tableofcontents

\section*{Introduction}

In this paper we study the rigidity properties of length metric spaces $X$ and $Y$ whenever there exists a 1-Lipschitz and volume (Hausdorff measure) preserving map $f\colon X\to Y$. If $X$ and $Y$ are both compact manifolds and have the same dimension, it is not difficult to prove that such map $f$ must be an isometry. We call this property {\it Lipschitz-Volume rigidity} (abbreviated as LV-rigidity). This property does not hold for general singular spaces, even if $Y$ contains only one singular point (see Example \ref{eg.cube.edge}). In general, the map $f$ may not yield any rigid relationship between the metrics on $X$ and $Y$ near the singular points.

In this paper we establish a LV-rigidity theorem for Alexandrov spaces. In \cite{Li13-vol.rig.Ric}, N. Li and F. Wang prove a similar result for non-collapsed Gromov-Hausdorff limits of manifolds with Ricci curvature uniformly bounded from below. The LV-rigidity holds with a lower curvature bound partially because the regular parts in these spaces are (almost) convex (see \cite{BGP}, \cite{CC97-I}, \cite{CN12} and compare to Example \ref{eg.cube.edge}). The Alexandrov case is more complicated because of the existence of boundary. These rigidity results provide a way to classify the length metric on $X$ or $Y$ when its volume attains a relatively minimal or maximal value (see \cite{BGCS}, \cite{LR10}, \cite{Storm02}, \cite{Storm06} and Section 5 in our paper).

Now we state our main results. By ``vol'', we denote the top dimensional Hausdorff measure. Due to \cite{LR-JDG}, these results also hold for rough volume. By $(\amalg X_\ell, d)$ we denote the disjoint union of length metric spaces $\{(X_\ell,d_\ell)\}$, where $d(p,q)=d_\ell(p,q)$ if $p,q\in X_\ell$ for some $\ell$ and $d(p,q)=\infty$ otherwise. Let $\intl{X}$ denote the interior of $X$ and $\partial X=X\setminus\intl{X}$ be the boundary. A map $f\colon X\to Y$ is said to be a path isometry if it preserves the lengths of paths. Let $|A|$ denote the cardinality of a set $A$.


\begin{thmA}[LV-rigidity]
  Let $Y$ and $X_\ell$, $\ell=1,2,\dots,N_0$ be $n$-dimensional Alexandrov spaces. If a 1-Lipschitz map $\dsp f\colon X=\dsju{\ell=1}{N_0} X_\ell\to Y$ satisfies $\vol X=\vol {f(X)}$, then $f$ is a path isometry and $f|_{\intl{X}}$ is an isometry with respect to the intrinsic metrics. If $f$ is also onto, then the metric on $Y$ coincides with the metric induced by the gluing structure $x_1\sim x_2$ $\Leftrightarrow$ $f(x_1)=f(x_2)$. Moreover,
  \begin{enumerate}
    \renewcommand{\labelenumi}{(A.\arabic{enumi})}
      \item for any $y\in Y$,  $\big|f^{-1}(y)\big|<\infty$;
      \item for any $y\in Y$, if $\big|f^{-1}(y)\big|\ge 2$, then $f^{-1}(y)\subseteq\partial{X}$;
      \item for any point $p\in Y$ such that $\big|f^{-1}(p)\big|\ge2$ and any $r>0$, the Hausdorff dimension of $\left\{y\in B_r(p): \big|f^{-1}(y)\big|=2\right\}$ is exactly $n-1$;
      \item the Hausdorff dimension of $\left\{x\in X: |f^{-1}(f(x))|\ge 3\right\}$ is at most $n-2$.
\end{enumerate}
\end{thmA}

It follows immediately from Theorem A that for every $\ell$, $f|_{\intl{X_\ell}}$ is an isometry with respect to the intrinsic metrics. Suppose that $\gamma_1, \gamma_2\subset X$ are glued, that is, $f(\gamma_1(t))=f(\gamma_2(t))$ for all $t$, where $\gamma_i(t)$ are parameterized by arc lengths. By Theorem A, the lengths of curves $\lh{\gamma_1}=\lh{f(\gamma_1)}=\lh{f(\gamma_1)}=\lh{\gamma_2}$. In this sense, we say that $\{X_\ell\}$ are glued by isometry. By (A.3) and (A.4), the gluing structure is uniquely determined by the part of one-to-one gluing. The gluing along non-extremal subset is possible (see Example \ref{eg.non.extremal}).
When $N=1$, we call the gluing {\it self-gluing} (see Examples \ref{eg1} and \ref{eg2}). In this case, Theorem A shows that without losing volume or increasing the distance, the metric on an Alexandrov space is ``rigid'' up to an isometric boundary gluing.
%
We show that $f$ is in fact an isometry in some special cases. The proofs are included in the proof of Corollary \ref{thmC.vol.iso}.

\begin{cor}\label{shrinking.cor}
  Under the assumptions as in Theorem A and that $f$ is onto, if any of the following is satisfied then $X$ is connected ($N_0=1$) and $f$ is an isometry.
  \begin{enumerate}
  \item $f$ is injective.
  \item The boundary $\partial X_\ell=\varnothing$ for some $\ell$.
  \item $f$ is surjective and $f(\partial X_\ell)\subseteq\partial Y$ for all $\ell$.
  \end{enumerate}
\end{cor}

As a special case of Theorem A, we give an affirmative answer to the following conjecture in \cite{LR10}.

\begin{conj*}
  Let $X$ and $Y$ be $n$-dimensional Alexandrov spaces and $f\colon X\to Y$ be a distance non-increasing onto map. If $\text{vol}(X)=\text{vol}(Y)$, then $Y$ is isometric to a space glued from $X$ along $\partial X$ and $f$ is a path isometry. In particular, $X$ is isometric to $Y$ if $\partial X=\varnothing$ or $f$ is injective.
\end{conj*}

Theorem A can be used to classify Alexandrov spaces which have relatively maximum or minimum volume (see \cite{Storm02}, \cite{Storm06} and Section 5).
Note that a gluing structure (see Section 1 for the rigorous definition) naturally induces a 1-Lipschitz onto map, which is the projection map. If the gluing is along a lower dimensional subset, then the volume is preserved by such projection map. Thus, Theorem A describes some necessary conditions to produce Alexandrov spaces by gluing Alexandrov spaces without losing volume. Recall the following well known theorem.

%
%

\begin{thm*}[Petrunin, \cite{Pet97}]\label{pet.glu}
The gluing of two Alexandrov spaces via an isometry between their boundaries produces an Alexandrov space with the same lower curvature bound.
\end{thm*}


The following result, conjectured by A. Petrunin, follows from Theorem A and the above theorem.


\begin{thm}\label{pet.iff}
  Assume that $n$-dimensional Alexandrov spaces $X_1$ and $X_2$ are glued via an identification $x\sim \phi(x)$, where $\phi:\partial X_1\to\partial X_2$ is a bijection. Then the glued space $Y=X_1\amalg X_2/x\sim \phi(x)$ is an Alexandrov space if and only if $\phi$ is an isometry with respect to the intrinsic metrics of $\partial X_1$ and $\partial X_2$.
\end{thm}

In general, gluing by isometry, together with the conditions (A.1)--(A.4) are not sufficient to guarantee that the glued space is an Alexandrov space (see Example \ref{eg.non.extremal}). As a generalization of Theorem \ref{pet.iff}, we conjecture that


\begin{conj}
  {\it A gluing along lower dimensional subsets of $n$-dimensional Alexandrov spaces produces an Alexandrov space if and only if the gluing is by isometry and the tangent cones over the glued space are Alexandrov spaces with curvature bounded from below by $0$.}
\end{conj}






The starting point of our proof is to show that $f$ is injective and almost preserves the lengths of paths when restricted to the set of $(n,\delta)$-strained (regular) points. The main difficulty is to show that the continuous extension of the restricted map is a path isometry. This substantially relies on the local Alexandrov structure, but fails for general singular spaces (see Example \ref{eg.cube.edge}). For an arbitrary Lipschitz curve $\gamma\subset \intl X$, using a technique of perturbation, we find an approximation $\sigma_i\subset f(X^\delta)$ for $f(\gamma)$ which satisfies
\begin{enumerate}
  \item $\sigma_i\to f(\gamma)$ uniformly;
  \item $\lh{\sigma_i}\to \lh{f(\gamma)}$;
  \item $f^{-1}(\sigma_i)\to\gamma$.
\end{enumerate}
Then by the semi-continuity of the lengths of curves, we conclude that $f|_{\intl X}$ is length preserving. In the case of $\gamma\subset \partial X$, a different approach is needed because $f(\intl X)$ needs not to be convex in $Y$ (see Example \ref{eg.bgy.conv}).

We divide the paper into five sections. In the first section we begin with basic definitions and a restatement of Theorem A. We will also outline our proof and discuss some examples.
In {\it Section 2}, we recall some results for Alexandrov spaces mainly from \cite{BGP} and \cite{OS94}. {\it Section 3} is aimed to show that $f$ is an isometry when restricted to the interior (Lemma \ref{thmC.int.iso}). 
We complete the proof of Theorem \ref{main.thm} in {\it Section 4} 
and give some applications in {\it Section 5}, including the LV-rigidity of spaces of directions (Theorem \ref{shrink.spd}). With a weaker assumption, we proves the relatively almost maximum volume theorems in \cite{LR10}, which appears as a natural extension of Grove and Petersen's results (\cite{GP92}) in Riemannian geometry.

I would like to thank Stephanie Alexander, Richard Bishop, Vitali Kapovitch, Anton Petrunin for their interest. The final version benefitted from numerous discussions with Jianguo Cao, Karsten Grove and Xiaochun Rong. It is my pleasure to thank the referee for many useful suggestions to improve the quality of the paper.

\medskip

{\bf Conventions and Notations}
\begin{itemize}
  \item $\Bbb S_\kappa^n$: the $n$-dimensional space form with constant curvature $\kappa$.
  \item $\dim_H(A)$:\; the Hausdorff dimension of $A$.
  \item $\vol A$:\; the $n$-dimensional Hausdorff measure of $A$, where $n$ is the Hausdorff dimension of $A$.
  \item $\intl A$:\; the interior of $A$.
  \item $\partial A$:\; the boundary of $A$.
  \item $d_A(x,y)$ or $|xy|_A$:\; the distance between two points $x$ and $y$ with respect to the intrinsic length metric over $A$.
  \item $B_r(p,X)=\{x: |px|_X<r\}$:\; the metric ball in $X$ around the point $p$. Sometimes, we omit $X$ if this does not cause any ambiguity. We use $o$ for the center of the ball if it is not necessarily to be specified. For instance, we denote a ball in $\Bbb S_\kappa^n$ by $B_r(o,\Bbb S_\kappa^n)$.
  \item $\geod{pq}_X$:\; a minimal geodesic connecting points $p$ and $q$ in $X$. Once it appears, it will always mean the same geodesic in the same context.
  \item $\drn_p^q$:\; the unit tangent vector of a geodesic $\geod{pq}$ at $p$.
  \item $\drn_{\geod{pq}}$:\; the unit tangent vector of the given geodesic $\geod{pq}$.
  \item $\lh\gamma$:\; the length of the curve $\gamma\colon[a,b]\to X$, defined as
      $$\lh\gamma
        =\sup\left\{\sum_{i=1}^Nd(\gamma(t_i),\gamma(t_{i+1})) : a=t_0<t_1<\dots<t_n=b\right\}
      .$$
  \item $\tau(\delta)$:\; a positive function in $\delta$ that satisfies $\dsp\lim_{\delta\to 0^+}\tau(\delta)=0$. 
  \item $X_n\overset{d_{GH}}{\longrightarrow}X$:\; the sequence of length metric spaces $X_n$ Gromov-Hausdorff converges to $X$.
  \item Let $A\subset X$ and $f\colon X\to Y$. The restricted map $f|_A$ is called an isometry if $|ab|_A=|f(a)f(b)|_{f(A)}$ for any $a,b\in A$.
\end{itemize}

By $\Alexnk$ we denote the isometric class of $n$-dimensional Alexandrov spaces with curvature $\ge\kappa$. For $X\in\Alexnk$, we use the following notations (c.f. \cite{BGP}).
\begin{itemize}
    \item $T_p(X)$ (or sometimes $T_p$):\; the tangent cone at point $p\in X$.
    \item $O_p$ :\; the vertex of the tangent cone $T_p(X)$.
    \item $\Sigma_p(X)$ (or sometimes $\Sigma_p$):\; the space of directions at a point $p\in X$.
    \item $X^\delta=X^{(n,\delta)}$:\; the set of $(n,\delta)$-strained points in $X$.
    \item $\snk(t)
        =\begin{cases}
          \frac{1}{\sqrt{\kappa}}\sin(\sqrt{\kappa}\,t), &\text{ if } \kappa>0;\\
          t, &\text{ if } \kappa=0;\\
          \frac{1}{\sqrt{-\kappa}}\sinh(\sqrt{-\kappa}\,t), &\text{ if } \kappa<0.
        \end{cases}$
\end{itemize}

\section{Lipschitz-Volume rigidity and examples}

We begin with a rigorous definition of the gluing of length metric spaces (c.f. \cite{BBI} $\S3$). Let $\{(X_\ell,d_\ell)\}$ be a collection of compact length metric spaces and $(X,d)=\amalg  (X_\ell,d_\ell)$. Let $\mathcal{R}$ be an equivalence relation over $X$,  denoted as $p\overset {\mathcal{R}}\sim q$. The quotient pseudometric $d_{\mathcal{R}}$ on $X$ is defined as
$$d_{\mathcal{R}}(p,q)
  =\inf\left\{\sum_{i=1}^Nd(p_i,q_i):p_1=p, q_N=q, p_{i+1}\overset {\mathcal{R}}\sim q_i,
  N\in\Bbb N
\right\}.$$
By identifying the points with zero $d_{\mathcal{R}}$-distance, we obtain a length metric space $(X/d_{\mathcal{R}}, \bar d_{\mathcal{R}})$, which is called the glued space from $\{X_\ell\}$ along the equivalence relation $d_{\mathcal{R}}$. The projection map $f\colon X\to Y$ is a 1-Lipschitz onto. 


Now we let $X=\dsju {\ell=1}{N_0}  X_\ell$ be the disjoint union of compact Alexandrov spaces $X_\ell\in\Alexnk$, $\ell=1,2,\dots,N_0$. Let $f\colon X\to Y$ be a 1-Lipschitz onto map which preserves volume, that is, $\vol Y=\vol X=\ssum{\ell=1}{N_0}\vol{X_\ell}$.
For any $y\in Y$, the points in $f^{-1}(y)$ are identified (glued) in terms of the equivalence relation $x_1\sim x_2\Leftrightarrow f(x_1)=f(x_2)$. We give a stratification of the gluing points. Namely, let
$$G_Y^m=\left\{y\in Y,\; \left|f^{-1}(y)\right|=m\right\}
  \text{\quad and \quad} G_X^m=f^{-1}(G_Y^m).
$$
We call $m_0=\max\{m:G_Y^m\neq\varnothing\}$ the maximum gluing number. In general, $m_0$ is independent of $N_0$. In our case, we will show that
$$m_0\le C\left(n,\kappa,\max_{1\le\ell\le N_0}\{\text{diam}(X_\ell)\}, \min_{1\le\ell\le N_0}\{\vol{X_\ell}\}\right)<\infty.$$
We denote the sets of gluing pints by $G_Y=\scup{m=2}{m_0} G_Y^m$ and $G_X=\scup{m=2}{m_0} G_X^m$.

\begin{thm}\label{main.thm}
  Let $Y\in\Alexnk$ and $X=\dsju{\ell=1}{N_0}X_\ell$, where $X_\ell\in\Alexnk$, $\ell=1,\dots,N_0$. If there exists a 1-Lipschitz onto map $f\colon X\to Y$ and $\vol Y=\vol X$, then the following statements hold.
  \begin{enumerate}
    \item If $G_X\neq\varnothing$ then $G_X\subseteq\partial X$.
    \item $f$ is a path isometry and $f|_{\intl{X}}$ is an isometry.
    \item $m_0\le\frac{\vol{B_{d_0}(S_{\kappa}^n)}}{v_0}$, where $d_0=\underset{1\le\ell\le N_0}\max\{\text{diam}(X_\ell)\}$ and $v_0=\underset{1\le\ell\le N_0}\min\{\vol{X_\ell}\}$.
    \item If $G_X\neq\varnothing$, then for any point ${a}\in G_X$, $p=f({a})\in G_Y$ and $r>0$,
        $$\dim_H(B_r({a})\cap G_X^2)=\dim_H(B_r(p)\cap G_Y^2)=n-1$$ and  $$\dim_H\left(\scup{m=3}{m_0}G_X^m\right) =\dim_H\left(\scup{m=3}{m_0}G_Y^m\right)
        \le n-2.$$
  \end{enumerate}
\end{thm}

Note that the proof of Theorem \ref{main.thm} (1) and (2) also applies to the case that $f$ is not onto.
Theorem A follows from the Theorem \ref{main.thm} and its proof.

\begin{remark}\label{remark.thmC}
\quad
  \begin{enumerate}
    \item If we replace $f\colon X\to Y$ by a distance non-decreasing map $g\colon Y\to X$ and assume that $\vol Y=\vol X$, then $g^{-1}$ can be extended to a 1-Lipschitz onto map $f\colon X\to Y$ due to the compactness of $Y$ and $X_\ell$.
    \item By cutting $Y=\Bbb S_1^n$ into $m_0$ isometric petals $\{X_\ell\}$, we see that the estimate in Theorem \ref{main.thm} (3) is sharp for the gluing of multiple spaces. In this case, $\dim_H(G_X^2)=\dim_H(G_Y^2)=n-1$, $G_X^m=\varnothing$ for $3\le m\le m_0-1$ and $\dim_H(G_X^{m_0})=\dim_H(G_Y^{m_0})=n-2$.
  \end{enumerate}
\end{remark}

In some special cases, $X$ must be connected ($N_0=1$) and $f$ is in fact a global isometry.

\begin{cor}\label{thmC.vol.iso}
  Under the assumptions as in Theorem \ref{main.thm}, if any of the following is satisfied, then $N_0=1$ and $f$ is an isometry.
  \begin{enumerate}
  \item $\partial X_\ell=\varnothing$ for some $\ell$.
  \item $G_X=\varnothing$.
  \item $G_Y\subseteq\partial Y$.
  \item $f(\partial X)\subseteq \partial Y$.
  \item $f^{-1}(Y^\delta)\cap G_X=\varnothing$ for $\delta>0$ small.
  \item $f^{-1}(Y^\delta)\subseteq \intl X$ for $\delta>0$ small.
  \end{enumerate}
\end{cor}

\begin{proof}
(1) and (2) are immediate consequences of Theorem \ref{main.thm} (2). (4) follows from (3) and (6) follows from (5) since $G_X\subseteq\partial X$. We first prove (3). By the assumption, we have $f^{-1}(\intl Y)\subseteq X\setminus G_X$. By Theorem \ref{main.thm} (2), $f|_{X\setminus G_X}$ is an isometry. Therefore, $f$ is an isometry since $\intl Y\subseteq f(X\setminus G_X)$ is totally geodesic in $Y$.

To prove (5), it suffices to show that the condition in (5) implies the condition in (3). For any $y\in G_Y$, by Theorem \ref{main.thm} (4), we have
$$\dim_H(B_r(y)\cap G_Y)=n-1.$$
By the assumption, we see that $G_Y\cap Y^\delta=\varnothing$. Then $B_r(y)\cap G_Y\subseteq B_r(y)\setminus Y^\delta$ and thus
$$\dim_H\left(B_r(y)\setminus Y^\delta\right)\ge\dim_H(B_r(y)\cap G_Y)=n-1.$$
If $y\notin\partial Y$, then there is $r>0$ so that $B_r(y)\subset \intl Y$. Thus by \cite{BGP} 10.6.1 (see Corollary \ref{bgp10.6.1}),
$$\dim_H\left(B_r(y)\setminus Y^\delta\right)\le\dim_H\left(\intl Y\setminus Y^\delta\right)\le n-2,$$
a contradiction.
\end{proof}


If $Y$ is a space glued from $\{X_\ell\}$ and $y\in Y$ is a glued point, it is not surprising that the space of directions $\Sigma_y(Y)\in\Alex^{n-1}(1)$ is also glued from $\Sigma_{x_j}(X_{\ell_j})$, where $\{x_j\in X_{\ell_j}\}=f^{-1}(y)$. Let $\Sigma_{f^{-1}(y)}=\dsju j{}\Sigma_{x_j}(X_{\ell_j})$ denote the disjoint union of these spaces of directions.

\begin{thm}[Gluing of spaces of directions]\label{thmC.spd.glue}
  Under the assumptions as in Theorem \ref{main.thm}, for any $y\in Y$, $\Sigma_y$ is a space glued from $\Sigma_{f^{-1}(y)}$ without losing volume. Therefore, the gluing along the spaces of directions also satisfies Theorem \ref{main.thm}.
\end{thm}


Now we outline our proof of Theorem \ref{main.thm}. Clearly, for any $x\in X$, $y\in Y$ and $r>0$, we have $\vol{B_r(y)}=\vol{f^{-1}(B_r(y))}$ and $\vol{B_r(x)}=\vol{f(B_r(x))}$. Using this, we first show that $f(X^\delta)\subseteq Y^{\tau(\delta)}$ (Lemma \ref{delta.img}), $X^\delta\cap G_X=\varnothing$ (Lemma \ref{delta.inj}) and prove that $f|_{X^\delta}$ is a $\tau(\delta)$-almost isometry (Lemma \ref{delta.lip}). The path isometry does not follow from an easy continuous extension of Lemma \ref{delta.lip}. In general, when the geodesics converge, their lengths may not converge to the length of the limit geodesic. See the following example.

\begin{example}\label{eg.cube.edge}
Let $(X,d)$ be an $n$-dimensional compact Riemannian manifold. Let $p,q\in X$ and $\geod{pq}$ be a geodesic joining $p$ and $q$. Let $\mathcal L$ be the length of curves with respect to metric $d$. For an arbitrary $\lambda\in[0,1)$, we define another length metric $d_\lambda$ on $X$, which is induced by the following length structure:
$${\mathcal L}_\lambda(\sigma)
  =\lambda\cdot\lh{\sigma\cap\geod{pq}}+\lh{\sigma\setminus\geod{pq}}.
$$
Let $f\colon (X,d)\to (X,d_\lambda)$ be the identity map. Clearly, $f$ is 1-Lipschitz onto and volume preserving, but $f$ is not a path isometry. When $\lambda=0$, we have ${\mathcal L}_0(\sigma)
=\lh{\sigma\setminus\geod{pq}}$. This is equivalent to identifying $\geod{pq}$ as one point. In this case $(X,d_\lambda)$ has only one singular point. If $\lambda>0$, then $f$ is 1-Lipschitz and $\frac1\lambda$-co-Lipschitz. In this case, the singular set in $(X,d_\lambda)$ is $f(\geod{pq})$ which has dimension 1.

Note that $(X,d_\lambda)$ does not satisfy the dimension comparison lemma \ref{map.dim} due to the branching of $d_\lambda$-geodesics near $\geod{pq}$. For any curves $\gamma_i\in X$ such that $\gamma_i\to \geod{pq}$ and $\lh{\gamma_i\cap \geod{pq}}=0$, we have
$$\liminf_{i\to \infty}\,{\mathcal L}_\lambda(\gamma_i)
=\liminf_{i\to \infty}\,\lh{\gamma_i}\ge \lh{\geod{pq}}
>{\mathcal L}_\lambda(\geod{pq}).$$

\end{example}

We prove the path isometry as follows. Using the almost isometry $f|_{X^\delta}$, we show that in any small neighborhood of a glued point $x\in G_X$, the set of singular points has dimension at least $n-1$ (Lemma \ref{glue.dim}). Because $\dim_H(\intl X\setminus X^\delta)\le n-2$, we conclude that $G_X\subseteq\partial X$. Thus $f(\intl X)=Y\setminus f(\partial X)$ is open.
%
%
%
%
%
%
%
Using this and the dimension comparison lemma \ref{map.dim}, for any Lipschitz curve $\gamma\subset \intl{X_\ell}$, we are able to find a perturbation $\sigma_i$ of $f(\gamma)$ that satisfies
\begin{enumerate}
  \item $\sigma_i\subset f(X_\ell^\delta)$ (this implies that $\lh{f^{-1}(\sigma_i)}=(1+\tau(\delta))\cdot \lh{\sigma_i}$);
  \item $\sigma_i\to f(\gamma)$ uniformly (this and (1) imply $f^{-1}(\sigma_i)\to\gamma$ because $G_X\subseteq\partial X$);
  \item $\lh{\sigma_i}\to \lh{f(\gamma)}$.
\end{enumerate}
Then by the semi-continuity of lengths of curves, we get the following lemma.



\begin{lem}[Interior isometry]\label{thmC.int.iso}
  Let the assumption be as in Theorem \ref{main.thm}. Then $G_X\subseteq \partial X$ and $f|_{\intl X}$ is an  isometry.
\end{lem}

Using the isometry on the interior, we then construct a volume preserving map over the spaces of directions (see (\ref{df.vol})). This enables us to improve Lemma \ref{glue.dim} and prove the path isometry.

%
%



\begin{lem}[Path isometry]\label{thmC.L.pres}
  Let the assumption be as in Theorem \ref{main.thm}. Then $f$ is a path isometry.
\end{lem}


We conclude this section with some directive examples.

\begin{example}[Non-Alexandrov gluing]\label{eg.not.Alex}
We construct some examples (see Figure 1) of gluing that do not produce Alexandrov spaces from Alexandrov spaces. These examples are all in dimension $2$ and the gluing are all by isometry. The glued spaces are not Alexandrov spaces because one can find bifurcated geodesics near the glued points. In $(a)$, rectangle $ABCD$ is glued with rectangle $EFGH$ along an interior segment $\overline{EF}\subset ABCD$ and the edge $\overline{E'F'}\subset EFGH$. This gluing does not satisfy (A.2). In $(b)$, square $ABCD$ is glued with square $A'B'C'D'$ at the vertices $A\sim A'$. This gluing does not satisfy (A.3). In (c), three rectangles with the same size are glued along one common edge. This gluing does not satisfy (A.4).

\bigskip

\begin{center}\includegraphics[scale=1]{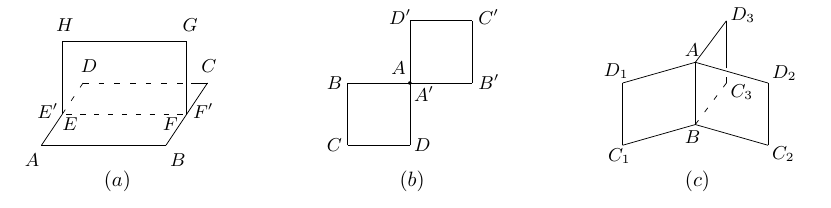}\end{center}

\begin{center} Figure 1\end{center}

\end{example}

\begin{example}[Non-isometric gluing]\label{eg.non.iso}

Let $E_r$ denote the 2-dimensional Euclidean square with side length $r$. Consider the boundary gluing of $E_R$ and $E_r$. Let $\phi:\partial E_R\to\partial E_r$ be a bijection that satisfies
$$\frac{1}{r}|\phi(x_1)\phi(x_2)|_{E_r}=\frac 1R|x_1x_2|_{E_R}.$$
Let $Y=E_R\amalg E_r/x\sim\phi(x)$ be the glued space along the identification $x\sim\phi(x)$. If $R=r$, then $\phi$ is an isometry, and thus $Y$ is an Alexandrov space as a doubled square. On the other hand, Theorem \ref{pet.iff} concludes that if $Y\in\Alex^2(\kappa)$, then $\phi$ has to be an isometry, that is, $R=r$. In fact, if $R>r$, let $f\colon E_R\amalg E_r\to Y$ be the projection map. Consider points $a, b\in f(\partial E_R)$. For a point $c\in f(\intl{E_R})$ close to $b$, geodesics $\geod{ab}_Y$ and $\geod{ac}_Y$ have overlaps, which yields a bifurcated geodesic.

One can also construct similarly the boundary gluing of two disks with radius $R$ and $r$. By Theorem \ref{pet.iff}, such gluing produces an Alexandrov space if and only if $R=r$. Note that in the disk case, even if $R\neq r$, there does not exist a bifurcated geodesic. 


\end{example}

\begin{example}[Involutional self-gluing]\label{eg1} This is an example for self-gluing (c.f. \cite{GP92}). Let $X=\Bbb D^2$ be a 2-dimensional flat unit disk. Then $\partial X=\Bbb S_1^1$  is a unit circle. Let $\phi:\partial X\to\partial X$ be an onto map and $Y=X/x\sim\phi(x)$ be the glued space. By Theorem \ref{rel.max}, $Y$ is an Alexandrov space if and only if $\phi$ is a reflection, antipodal map or identity, and $Y$ is homeomorphic to $\Bbb S^2$, $\Bbb {RP}^2$ and $\Bbb D^2$ respectively. Thus the maximum gluing number of the self-gluing over $\Bbb D^2$ is no more than $2$. However, if one estimates it by Theorem \ref{main.thm} (3),
$$m_0\le \frac{\pi\cdot 2^2}{\pi\cdot 1^2}=4.$$
\end{example}

\begin{example}[Gluing along non-extremal subsets]\label{eg.non.extremal} When Alexandrov spaces are glued along non-extremal subsets, it may still produce an Alexandrov space. In the following we glue two flat triangle planes, where $\measuredangle A_1B_1C_1+\measuredangle A_2B_2C_2=\pi$ and edge $\overline{B_1 C_1}$ is glued with edge $\overline{B_2 C_2}$. The glued space is also a triangle. When $\measuredangle A_1B_1C_1>\frac\pi2$, edge $\overline{B_1 C_1}$ is not an extremal subset in the triangle plane $\triangle A_1B_1C_1$. If $\measuredangle A_2B_2C_2+\measuredangle A_1B_1C_1>\pi$, then the glued space is not convex, which is not an Alexandrov space.

\begin{center}\includegraphics[scale=1]{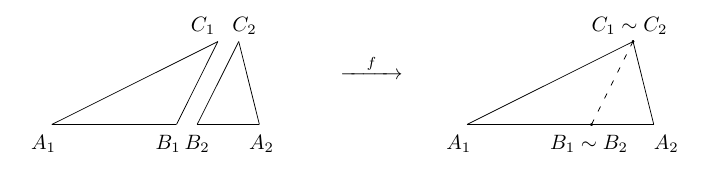}\end{center}

\begin{center} Figure 2\end{center}

\end{example}

\begin{example}\label{eg2} This is an example for self-gluing with $m_0=3$. Let $X$ be a triangle in a Euclidean plane. We identify points on each side via a reflection about the mid point, that is, $\overline{Ab}\sim\overline{bC}$, $\overline{Ac}\sim\overline{cB}$, $\overline{Ba}\sim\overline{aC}$ isometrically. The glued space $Y\in\Alex^2(0)$ is a tetrahedron. We see that $G_X^2=\overline{AB}\cup\overline{BC}\cup\overline{AC}\setminus\{A,B,C,a,b,c\}$ is open dense in $\partial X$ with $\dim_H(G_X^2)=1$ and $G_X^3=\{A, B, C\}$ consists of isolated points with $\dim_H(G_X^3)=0$.

\begin{center}\includegraphics[scale=1]{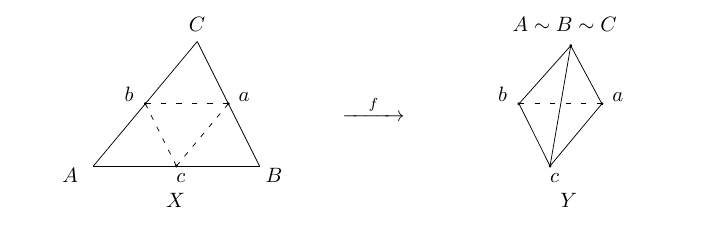}\end{center}

\begin{center} Figure 3\end{center}

\end{example}

\begin{example}\label{eg4}
  Let $Y=B_{\frac\pi{2}}(o,\Bbb S_1^n)$ be the semi $n$-sphere. By Theorem \ref{main.thm} (3), the maximum gluing number $m_0\le \frac{\vol{B_{\pi}(o,\Bbb S_1^n)}}{\vol Y}=2$, which states that any gluing with more than 2 points identified does not result an Alexandrov space. This is also verified by Theorem \ref{rel.max}, in the case $\kappa=1$, $\Sigma_p=\Bbb S_1^{n-1}$ and $R=\frac\pi{2}$.
\end{example}

\section{Preliminaries}

In this section we recall some important properties for Alexandrov spaces (c.f. \cite{BGP} and \cite{OS94}). Let $X\in\Alexnk$. By $X^{(m,\delta)}(\rho)$ we denote the collection of points which admit  $(m,\delta)$-strainers $\{(a_i,b_i)\}_{i=1}^m$ of size $\rho=\underset{1\le i\le m}\min\{|pa_i|, |pb_i|\}>0$. Clearly,
 $X^{(m,\delta)}=\scup{\rho>0}{}X^{(m,\delta)}(\rho)$ is open in $X$. Moreover, we have

\begin{thm}[{\cite{BGP}} 10.6] \label{bgp10.6}
  Let $X\in\Alexnk$. For $1\le m\le n$ and any $\delta>0$, $$\dim_H\left(X\setminus X^{(m,\delta)}\right)\le m-1.$$
\end{thm}

The local structures at the points in $X^{(n,\delta)}$ and $X^{(n-1,\delta)}$ satisfy the following three theorems. We let $X^\delta(\rho)=X^{(n,\delta)}(\rho)$ and $X^\delta=X^{(n,\delta)}$.


\begin{thm}[{\cite{BGP}} Theorem 9.4]\label{bgp9.4}
  Let $X\in \Alexnk$. If $p\in X^\delta(\rho)$, then the
  map $\psi:X\rightarrow \Bbb R^n$, $x\mapsto(|a_1x|, \cdots, |a_nx|)$ maps a small neighborhood $U$ of $p$ $\tau(\delta,\delta_1)$-almost isometrically
  onto a domain in $\Bbb R^n$, that is, $||xy|_X-|\psi(x)\psi(y)|_{\Bbb R^n}|<\tau(\delta,\delta_1)|xy|_X$ for any
  $x,y\in U$, where $\delta_1=\rho^{-1}\cdot\text{diam}(U)$. In particular, for $r<\delta\rho$, $\psi$ is a $\tau(\delta)$-almost isometry when restricting to $B_r(p)$.
\end{thm}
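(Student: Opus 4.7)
The plan is to reduce the claim to two local facts: that the distance functions $x\mapsto|a_ix|$ have gradients at every $x\in U$ which form an almost orthonormal frame, and that these gradients control the Euclidean distance $|\psi(x)\psi(y)|$ via a first variation estimate. First I would record the ``strainer at $p$'' inequalities: by the definition of an $(n,\delta)$-strainer of size $\rho$ one has $|\tilde\angle_\kappa a_ipa_j-\pi/2|<\delta$, $|\tilde\angle_\kappa a_ipb_i-\pi|<\delta$, and similarly for the mixed and $b$-pairs. Triangle comparison (monotonicity of $\tilde\angle_\kappa$) together with the hinge lemma then gives that the directions $\xi_i=\uparrow_p^{a_i}\in\Sigma_p$ and $\eta_i=\uparrow_p^{b_i}$ satisfy $|\angle(\xi_i,\xi_j)-\pi/2|<\tau(\delta)$ and $|\angle(\xi_i,\eta_i)-\pi|<\tau(\delta)$; in particular the $\xi_i$ form a $\tau(\delta)$-almost orthonormal basis of $\Sigma_p$.

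Next I would propagate this to a generic $x\in U$. Because $|xp|\le\diam(U)\le\delta_1\rho$ while $|pa_i|\ge\rho$, triangle comparison applied to the triangles $a_ixa_j$, $a_ixb_i$, $a_ixb_j$ shows that $(a_i,b_i)_{i=1}^n$ is still an $(n,\tau(\delta,\delta_1))$-strainer at $x$ (size $\approx\rho$). Hence, by the previous step applied at $x$, the directions $\uparrow_x^{a_i}$ form a $\tau(\delta,\delta_1)$-almost orthonormal basis of $\Sigma_x$, and in particular for any unit $v\in\Sigma_x$,
\begin{equation*}
\sum_{i=1}^n\cos^2\angle\bigl(v,\uparrow_x^{a_i}\bigr)\;=\;1+\tau(\delta,\delta_1),
\end{equation*}
which is the Alexandrov version of Parseval. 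Now take $x,y\in U$ and apply the first variation formula in Alexandrov spaces (BGP \S3): along the geodesic $\geod{xy}$,
\begin{equation*}
|a_iy|-|a_ix|\;=\;-|xy|\cos\angle\bigl(\uparrow_x^y,\uparrow_x^{a_i}\bigr)\;+\;o(|xy|),
\end{equation*}
with the $o$-term bounded by $\tau(\delta,\delta_1)|xy|$ uniformly because of the strainer control. Squaring and summing over $i$ and invoking the Parseval-type identity at $x$ with $v=\uparrow_x^y$ yields
\begin{equation*}
|\psi(x)\psi(y)|^2_{\mathbb R^n}\;=\;\sum_{i=1}^n\bigl(|a_ix|-|a_iy|\bigr)^2\;=\;|xy|^2\bigl(1+\tau(\delta,\delta_1)\bigr),
\end{equation*}
which is exactly the $\tau(\delta,\delta_1)$-almost isometry statement. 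The last sentence of the theorem then follows by taking $U=B_\epsilon(p)$: then $\delta_1=2\epsilon/\rho$, so choosing $\epsilon\ll\delta\rho$ absorbs $\delta_1$ into $\delta$ and $\tau(\delta,\delta_1)$ becomes $\tau(\delta)$. Openness of the image (``onto a domain'') follows because $\psi$ is a continuous, $\tau$-almost isometric injection between open subsets of equidimensional spaces, so invariance of domain applies.

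The main obstacle I anticipate is not the algebra but the transfer step: verifying uniformly, with explicit $\tau(\delta,\delta_1)$ control, that the directions $\uparrow_x^{a_i}$ inherit almost-orthonormality from $\uparrow_p^{a_i}$. Two subtleties enter. First, geodesics from $x$ to $a_i$ need not be unique, so one must argue that the bound on $\angle(\uparrow_x^{a_i},\uparrow_x^{a_j})$ does not depend on the choice; this is handled via the monotonicity of $\tilde\angle_\kappa$ applied to the hinge at $x$ together with the opposite strainer $b_i$ which forces $\angle(\uparrow_x^{a_i},\uparrow_x^{b_i})>\pi-\tau(\delta,\delta_1)$, pinning down the direction up to $\tau$. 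Second, the first variation formula's error term must be made uniform over $x,y\in U$ rather than asymptotic as $y\to x$; this is achieved by feeding the strainer control back into comparison triangles $\tilde\triangle a_ixy$ and using the law of cosines at curvature $\kappa$, the discrepancy with the flat law of cosines being absorbed into $\tau(\delta_1)$. Once these two points are settled, the rest is routine.
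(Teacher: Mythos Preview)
The paper does not prove this theorem at all: it is stated in the Preliminaries section as a citation of Theorem~9.4 in \cite{BGP}, and is used as a black box thereafter. So there is no ``paper's own proof'' to compare against.

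That said, your sketch is essentially the argument in \cite{BGP}. One point deserves more care than you give it: the Parseval-type identity
\[
\sum_{i=1}^n\cos^2\angle\bigl(v,\uparrow_x^{a_i}\bigr)=1+\tau(\delta,\delta_1)
\]
is not a formal consequence of the $\uparrow_x^{a_i}$ being pairwise almost orthogonal in an arbitrary $\Alex^{n-1}(1)$ space $\Sigma_x$; it requires that $\Sigma_x$ be $\tau(\delta,\delta_1)$-close to the round sphere $\mathbb S_1^{n-1}$. In \cite{BGP} this is handled by first showing (Lemma~5.6 there) that the existence of an $(n,\delta)$-strainer forces every direction $v\in\Sigma_x$ to be $\tau(\delta)$-close to the ``coordinate frame'' generated by the $\uparrow_x^{a_i}$ and $\uparrow_x^{b_i}$, which in turn forces $\Sigma_x$ to be almost round. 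Once that is in place your Parseval step and the rest of the argument go through. Your identification of the two genuine subtleties (non-uniqueness of geodesics to $a_i$, and uniformity of the first-variation error) is accurate, and your proposed fixes via the opposite strainer points $b_i$ and the comparison law of cosines are the standard ones.
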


\begin{thm}[{\cite{BGP}} 12.8] \label{bgp12.8}
  Let $X\in\Alexnk$. For any $p\in X^{(n-1,\delta)}$, if $p\in \intl X$, then $p\in X^{\tau(\delta)}$.
\end{thm}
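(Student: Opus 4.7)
I would promote the given $(n-1,\delta)$-strainer $\{(a_i,b_i)\}_{i=1}^{n-1}$ at $p$ to an $(n,\tau(\delta))$-strainer by producing one additional antipodal pair $(a_n,b_n)$; the interior hypothesis is exactly the boundary-free condition that becomes decisive at a key step. The convenient arena is the space of directions $\Sigma_p = \Sigma_p(X)$, which by the inductive structure of $\Alexnk$ is a closed $(n-1)$-dimensional Alexandrov space of curvature $\geq 1$, satisfying $\partial \Sigma_p = \varnothing$ if and only if $p \in X^\circ$. Toponogov comparison at $p$ translates the given strainer inequalities into inequalities on $\Sigma_p$ among $\xi_i = \drn_p^{a_i}$ and $\eta_i = \drn_p^{b_i}$: $|\xi_i\eta_i| > \pi-\delta$ and $|\xi_i\xi_j|,\,|\xi_i\eta_j|,\,|\eta_i\eta_j| > \pi/2-\delta$ for $i\neq j$, so $\{\xi_i,\eta_i\}_{i=1}^{n-1}$ lies $\delta$-close to an equatorial $(n-2)$-sphere in $\Sigma_p$.

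The production of the extra pair splits into two substeps. \emph{Substep A:} find a ``pole'' $\xi_n\in\Sigma_p$ with $|\xi_n\xi_i|,\,|\xi_n\eta_i| > \pi/2-\tau(\delta)$ for every $i<n$. Since $\dim_H \Sigma_p = n-1$ while the $2(n-1)$ strainer directions lie $\tau(\delta)$-close to a set of dimension $n-2$, a stability/volume-comparison argument (using Bishop-Gromov in $\Sigma_p$) forces such a $\xi_n$ to exist: the alternative would confine $\Sigma_p$ to a $\tau(\delta)$-neighborhood of an $(n-2)$-dim equatorial locus, contradicting its Hausdorff dimension. \emph{Substep B:} find the antipode $\eta_n$ with $|\xi_n \eta_n| > \pi-\tau(\delta)$. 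Take $\eta_n$ to be a maximizer of $\sigma \mapsto |\sigma\xi_n|$ on $\Sigma_p$; if $|\xi_n\eta_n| \leq \pi-\tau(\delta)$, then the first variation inequality together with curvature $\geq 1$ would force every direction from $\eta_n$ to make angle $\geq \pi/2$ with $\drn_{\eta_n}^{\xi_n}$, which places $\eta_n$ on a boundary-like locus of $\Sigma_p$, contradicting $\partial\Sigma_p = \varnothing$. The remaining orthogonality $|\eta_n\xi_i|,\,|\eta_n\eta_i| > \pi/2-\tau(\delta)$ then follows by the triangle inequality in $\Sigma_p$ from $|\xi_n\xi_i|,|\xi_n\eta_i|>\pi/2-\tau(\delta)$, $|\xi_n\eta_n|>\pi-\tau(\delta)$, and $|\xi_i\eta_i|>\pi-\delta$. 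Finally, realising $\xi_n,\eta_n$ as directions to points $a_n,b_n$ along short geodesics of length $\rho_n\ll\rho$ from $p$, Toponogov comparison converts these angular inequalities in $\Sigma_p$ into the required comparison-angle inequalities at $p$ in $X$, producing an $(n,\tau(\delta))$-strainer.

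\textbf{Main obstacle.} The substantive step is the pole-existence claim in Substep A. What is actually needed is a quantitative Gromov-Hausdorff rigidity statement: a closed $(n-1)$-dim Alexandrov space of curvature $\geq 1$ that admits $n-1$ pairs of $\delta$-almost antipodal, pairwise $\delta$-almost orthogonal points must itself be $\tau(\delta)$-close to either $\mathbb{S}^{n-1}$ or the hemisphere $\mathbb{D}^{n-1}_+$, with the interior hypothesis eliminating the hemisphere alternative. Managing the explicit rate $\delta \to \tau(\delta)$ through the composition of Toponogov comparison, volume comparison, and the transfer between directional angles, comparison angles, and Hausdorff distance is where the bulk of the technical care is required.
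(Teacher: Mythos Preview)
The paper does not give a proof of this statement; it is quoted from \cite{BGP}~12.8 as background, so there is no in-paper argument to compare against. Your outline follows the same route as the original BGP proof: pass to $\Sigma_p\in\Alex^{n-1}(1)$, use that $\partial\Sigma_p=\varnothing$ iff $p\in X^\circ$, and manufacture the missing $n$th strainer pair as a pole/antipole for the near-equatorial configuration $\{\xi_i,\eta_i\}_{i<n}$.

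There is, however, a genuine gap in Substep~B. From ``$\eta_n$ maximizes $|\xi_n\,\cdot\,|$'' you correctly get that every $\zeta\in\Sigma_{\eta_n}(\Sigma_p)$ satisfies $|\zeta\,\drn_{\eta_n}^{\xi_n}|\ge\pi/2$, but this does \emph{not} by itself place $\eta_n$ on a boundary: boundary-free spaces of curvature $\ge1$ can have radius $\le\pi/2$ (e.g.\ $\mathbb{RP}^{n-1}$), so such critical points exist freely without any boundary. What you actually need is that $\drn_{\eta_n}^{\xi_n}$ has a near-antipode in $\Sigma_{\eta_n}(\Sigma_p)$; then first variation forces $|\xi_n\,\cdot\,|$ to increase past $\eta_n$ unless $|\xi_n\eta_n|>\pi-\tau(\delta)$. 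That near-antipode comes from knowing $\Sigma_{\eta_n}(\Sigma_p)$ is itself $\tau(\delta)$-close to $\mathbb{S}^{n-2}$, i.e.\ from applying the very statement you are proving one dimension down, using an $(n-2,\tau(\delta))$-strainer at $\eta_n$ built from the $\xi_i,\eta_i$. But to build that strainer you need $|\eta_n\xi_i|,|\eta_n\eta_i|\approx\pi/2$, which in your write-up is deduced only \emph{after} $|\xi_n\eta_n|>\pi-\tau(\delta)$ is established; as written the argument is circular. The fix is either to run a genuine induction on $n$ (the BGP route), or to replace Substep~B by the coordinate map $\Sigma_p\to\mathbb{R}^{n-1}$, $\sigma\mapsto(|\sigma\xi_1|,\dots,|\sigma\xi_{n-1}|)$, and argue via local degree that when $\partial\Sigma_p=\varnothing$ both preimages of the pole must appear. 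Your ``main obstacle'' paragraph correctly names this as the crux; the body of the proof just has not yet discharged it.
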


\begin{thm}[{\cite{BGP}} 12.9.1]\label{bgp12.9.1}
  Let $X\in\Alexnk$ and $p\in X^{(n-1,\delta)}$ with a strainer of size $\rho$. If $p\in\partial X$, then there is a neighborhood $U$ of $p$, which admits a
  $\tau(\delta,\delta_1)$-almost isometry (here $\delta_1=\rho^{-1}\cdot\diam(U)$) onto a closed cube in $\Bbb R^n$ such that $U\cap\partial X$ maps onto one of the hyperfaces of this cube.
\end{thm}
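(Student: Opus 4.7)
The plan is to reduce the boundary case to the interior $(n,\delta)$-strained case (Theorem \ref{bgp9.4}) by passing to the Perel'man double. Form $\tilde X = X \cup_{\partial X} X \in \Alexnk$ via Theorem \ref{Pel.glu}, equipped with the canonical isometric involution $\sigma: \tilde X \to \tilde X$ that swaps the two copies of $X$ and fixes $\partial X$ pointwise. The given $(n-1,\delta)$-strainers $\{(a_i,b_i)\}_{i=1}^{n-1}$ for $p$ in $X$ remain $(n-1,\delta)$-strainers in $\tilde X$ of the same size $\rho$, since the inclusion $X \hookrightarrow \tilde X$ is an isometric embedding.

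To supply an $n$-th strainer pair in $\tilde X$, I exploit $\sigma$. Choose $a_n \in X^\circ$ close to $p$ so that $\uparrow_p^{a_n}$ is nearly orthogonal to each $\uparrow_p^{a_i}$, $i<n$; this is possible because the $(n-1,\delta)$-strainer condition forces those $2(n-1)$ directions to span an $(n-1)$-dimensional ``equator'' in $\Sigma_p(\tilde X)$, leaving a residual $1$-sphere from which to select $\uparrow_p^{a_n}$. Set $b_n := \sigma(a_n)$. The $\sigma$-invariant concatenation $\geod{a_n p} \cup \geod{p b_n}$ has length $2|p a_n|$, and triangle comparison together with the first variation formula yields $\angle a_n p b_n \ge \pi - \tau(\delta)$, while each cross-angle $\angle a_i p a_n$ and $\angle a_i p b_n$ lies within $\tau(\delta)$ of $\pi/2$. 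This produces an $(n,\tau(\delta))$-strainer at $p$ in $\tilde X$ of size comparable to $\rho$.

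Now apply Theorem \ref{bgp9.4} in $\tilde X$: the coordinate map $\psi(x) := (|a_1 x|,\ldots,|a_n x|)$ is a $\tau(\delta,\delta_1)$-almost isometry from a neighborhood $\tilde U$ of $p$ onto a domain in $\mathbb R^n$. To locate $\partial X$ under $\psi$, track $\sigma$: it fixes each $a_i$ ($i<n$) and swaps $a_n \leftrightarrow b_n$, so the coordinate vector of $\sigma(x)$ is $(|a_1x|,\ldots,|a_{n-1}x|,|b_n x|)$. The near-colinearity $|a_n x| + |b_n x| \approx |a_n b_n|$ for $x$ near $p$ (standard from the straightness of the strainer pair) shows that $\psi \circ \sigma \circ \psi^{-1}$ is $\tau$-close to the Euclidean reflection in the hyperplane $H := \{y_n = \tfrac12|a_n b_n|\}$. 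Hence $\psi(\partial X \cap \tilde U) = \psi(\mathrm{Fix}(\sigma) \cap \tilde U)$ lies $\tau$-close to $H$ and, by the almost-isometry, actually surjects onto a full neighborhood of $\psi(p)$ in $H$. To produce the cube conclusion, take $U := \psi^{-1}(C) \cap X$ for a small coordinate cube $C$ centered at $\psi(p)$ with one face in $H$; a $\tau$-small radial adjustment post-composed with $\psi$ maps $\psi(U)$ onto an exact cube with $U \cap \partial X$ onto its bottom hyperface.

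The main obstacle is verifying that the strainer pair $(a_n,b_n)$ built from $\sigma$ really satisfies the strainer inequalities with error $\tau(\delta)$. This rests on the structural fact that $\Sigma_p(\tilde X)$ is the spherical double of $\Sigma_p(X)$ along $\Sigma_p(\partial X)$, so that the direction normal to $\partial X$ at $p$ in either copy lifts to an honest geodesic through $p$ in $\tilde X$ with its $\sigma$-image antipodal at $p$. That structural fact is inherent to the proof of Theorem \ref{Pel.glu} and feeds both the straightness estimate for $\angle a_n p b_n$ and the orthogonality estimates with the original $\uparrow_p^{a_i}$.
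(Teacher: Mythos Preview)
The paper does not give a proof of this statement: Theorem~\ref{bgp12.9.1} is quoted in Section~2 as a preliminary result from \cite{BGP}, with the citation ``[BGP] 12.9.1'' in the theorem header, and no argument is supplied. So there is nothing in the paper to compare your proposal against.

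As for the proposal itself, the doubling strategy is a reasonable route, but you should be aware of two issues. First, there is a potential logical circularity: Perel'man's doubling theorem (Theorem~\ref{Pel.glu}) is proved using the fine boundary structure of Alexandrov spaces, and results of the type 12.8--12.9 in \cite{BGP} are part of that toolkit; you would need to check that the specific instance you invoke does not already rely on 12.9.1. Second, the key estimate $\angle a_n p b_n \ge \pi - \tau(\delta)$ is not obtained from ``triangle comparison together with the first variation formula'' alone; it really uses that $\Sigma_p(\tilde X)$ is the spherical double of $\Sigma_p(X)$ along $\partial\Sigma_p(X)$, which you acknowledge only at the end. That structural fact is exactly what makes your $\sigma$-reflected pair a strainer, and it deserves to be stated up front rather than deferred. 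Once you have $p$ as an interior $(n-1,\delta)$-strained point of $\tilde X$, you could alternatively invoke Theorem~\ref{bgp12.8} directly to get the $(n,\tau(\delta))$-strainer, and then use $\sigma$ only to pin down the boundary hyperface; this would make the argument cleaner and isolate the one place where the doubling is doing real work.
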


A consequence of Theorems \ref{bgp10.6} and \ref{bgp12.8} is that

\begin{cor} [{\cite{BGP}} 10.6.1] \label{bgp10.6.1} Let $X\in\Alexnk$.
For sufficiently small $\delta>0$, $\dim_H\left(\intl X\setminus X^\delta\right)\le n-2$.
\end{cor}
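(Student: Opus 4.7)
The plan is to combine the two immediately preceding results in a one-step sandwich: Theorem \ref{bgp12.8} forces interior $(n-1,\delta)$-strained points into $X^\delta$ (up to replacing $\delta$ by $\tau(\delta)$), while Theorem \ref{bgp10.6} bounds the Hausdorff dimension of the complement of the set of $(n-1,\delta)$-strained points by $n-2$. So the corollary will follow by a direct inclusion and monotonicity of Hausdorff dimension.

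More concretely, I would proceed as follows. Let $\tau(\cdot)$ denote the function appearing in Theorem \ref{bgp12.8}. Given $\delta>0$, first choose $\delta_0>0$ so small that both (i) Theorem \ref{bgp10.6} is applicable with $m=n-1$ and parameter $\delta_0$, and (ii) $\tau(\delta_0)\le \delta$. I then claim the inclusion
\[
X^\circ\setminus X^\delta\;\subseteq\; X\setminus X^{(n-1,\delta_0)}.
\]
Equivalently, any $p\in X^\circ\cap X^{(n-1,\delta_0)}$ lies in $X^\delta$: by Theorem \ref{bgp12.8} such a $p$ is automatically $(n,\tau(\delta_0))$-strained, hence by choice of $\delta_0$ it is $(n,\delta)$-strained, i.e. $p\in X^\delta$. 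Taking complements inside $X^\circ$ yields the inclusion.

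Applying Theorem \ref{bgp10.6} with $m=n-1$ gives $\dim_H(X\setminus X^{(n-1,\delta_0)})\le n-2$, and by monotonicity of Hausdorff dimension the inclusion above delivers
\[
\dim_H\!\left(X^\circ\setminus X^\delta\right)\le n-2,
\]
which is the desired estimate. The only delicate point is the coordination of the two smallness conditions on $\delta_0$, but since $\tau(\delta_0)\to 0$ as $\delta_0\to 0$ this is a routine matter and presents no real obstacle; the whole argument is essentially a bookkeeping combination of the already-quoted results.
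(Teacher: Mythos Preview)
Your argument is correct and is exactly the approach the paper indicates: the corollary is stated as ``a consequence of Theorem \ref{bgp12.8} and \ref{bgp10.6}'' with no further proof, and your inclusion $X^\circ\setminus X^\delta\subseteq X\setminus X^{(n-1,\delta_0)}$ together with the dimension bound from Theorem \ref{bgp10.6} is precisely how one unpacks that remark.
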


Let $\reg X=\scap{\delta>0}{}X^\delta$. By the Maximal volume rigidity theorem for Alexandrov spaces, for any $p\in\reg X$, the space of directions $\Sigma_p=\Bbb S_1^{n-1}$. 

\begin{thm}[{\cite{OS94}}]\label{os94}
  Let $X\in\Alexnk$. Then $\dim_H\left(X\setminus \reg X\right)\le n-1$.
\end{thm}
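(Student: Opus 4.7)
\emph{Proof proposal.} My plan is to decompose $X\setminus\reg X$ into its interior and boundary parts and bound the Hausdorff dimension of each piece separately, using the stratification tools recorded earlier in this section. The trivial inclusion
\[
X\setminus\reg X \;\subseteq\; (X^\circ\setminus\reg X)\,\cup\,\partial X
\]
reduces the claim to showing that each term on the right has Hausdorff dimension at most $n-1$.

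For the interior term, I would write $X^\circ\setminus\reg X=\bigcup_{k=1}^{\infty}(X^\circ\setminus X^{1/k})$. By Corollary~\ref{bgp10.6.1}, each $X^\circ\setminus X^{1/k}$ has Hausdorff dimension at most $n-2$ once $1/k$ is small enough, and countable stability of Hausdorff dimension then gives $\dim_H(X^\circ\setminus\reg X)\le n-2$.

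The substantive step is to show $\dim_H(\partial X)\le n-1$. Fixing a small $\delta>0$, I would split $\partial X$ as $(\partial X\cap X^{(n-1,\delta)})\cup(\partial X\setminus X^{(n-1,\delta)})$. The second piece is contained in $X\setminus X^{(n-1,\delta)}$, so Theorem~\ref{bgp10.6} applied with $m=n-1$ gives it Hausdorff dimension at most $n-2$. For the strained piece, Theorem~\ref{bgp12.9.1} provides, at each $p\in\partial X\cap X^{(n-1,\delta)}$, a neighborhood $U_p$ that is $\tau(\delta,\delta_1)$-almost isometric to a cube in $\mathbb{R}^n$ in such a way that $U_p\cap\partial X$ lands on a single hyperface of that cube. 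Shrinking $U_p$ makes $\delta_1=\rho^{-1}\cdot\diam(U_p)$ small enough that the chart is bi-Lipschitz with bounded constants, so the image hyperface forces $\dim_H(U_p\cap\partial X)\le n-1$. By separability of $X$, countably many such $U_p$ cover $\partial X\cap X^{(n-1,\delta)}$, and a further application of countable stability yields $\dim_H(\partial X\cap X^{(n-1,\delta)})\le n-1$. Combining with the previous bound gives $\dim_H(\partial X)\le n-1$, which together with the interior estimate completes the proof.

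The main obstacle I foresee is the bookkeeping around the local boundary charts: one must verify that the $U_p$ can be chosen with uniformly controlled bi-Lipschitz constants so that the countable cover preserves the Hausdorff dimension bound inherited from the $(n-1)$-dimensional cube hyperfaces in $\mathbb{R}^n$. This is a standard localization argument, but it does require some careful quantifier chasing in choosing the scale of each $U_p$ relative to the strainer size $\rho$.
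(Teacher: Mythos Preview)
The paper does not supply its own proof of this statement; it is quoted verbatim as a result of Otsu--Shioya \cite{OS94} and used as a black box. So there is no argument in the paper to compare against.

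Your derivation is correct and in fact shows that, for the bare dimension estimate $\dim_H(X\setminus\reg X)\le n-1$, one does not need the full Otsu--Shioya machinery: the BGP facts recorded just above (Theorems~\ref{bgp10.6}, \ref{bgp12.8}, \ref{bgp12.9.1} and Corollary~\ref{bgp10.6.1}) already suffice. The interior piece is handled by Corollary~\ref{bgp10.6.1} plus countable stability, exactly as you say, and your treatment of $\partial X$ via the dichotomy ``$(n-1,\delta)$-strained or not'' is the standard one. The quantifier issue you flag at the end is harmless: for each fixed small $\delta$, every $p\in\partial X\cap X^{(n-1,\delta)}$ has some strainer size $\rho(p)>0$, and you are free to choose $U_p$ with $\diam(U_p)<\delta\rho(p)$ so that the chart of Theorem~\ref{bgp12.9.1} is, say, $2$-bi-Lipschitz; the constants need not be uniform in $p$ because Hausdorff dimension is preserved under any bi-Lipschitz map and is countably stable. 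What Otsu--Shioya actually prove is considerably stronger (a $C^1$ Riemannian structure on a set of full measure), but for the purposes of this paper only the dimension bound is ever invoked, and your argument recovers it directly.
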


We also recall properties for spaces of directions in Alexandrov spaces.

\begin{thm}[{\cite{BGP}} 7.14]\label{bgp7.14}
  Let $p_i\in X\in\Alexnk$ and $p_i\to p$. Then $\dsp\liminf_{i\to\infty}\Sigma_{p_i}\ge\Sigma_p$, that is, for any convergent subsequence $\Sigma_{p_{i_k}}\overset{d_{GH}}\longrightarrow\Sigma$, there is a distance non-decreasing map $g\colon \Sigma_p\to\Sigma$.
\end{thm}

\begin{thm}[\cite{Pet98}]\label{pet98}
  Let $X\in \Alexnk$. Then for any $x,y\in\geod{pq}_X\setminus\{p,q\}$, $\Sigma_x$ is isometric to $\Sigma_y$.
\end{thm}

By Theorems \ref{bgp7.14} and \ref{pet98}, we get the following corollary.

\begin{cor}
  Let $X\in\Alexnk$. Then $\reg X$ is convex in $X$.
\end{cor}

We now consider the volume of small balls around almost regular points in Alexandrov spaces. The following are consequences of Theorem \ref{bgp9.4} and direct computations in Euclidean spaces.

\begin{lem}\label{vol.2ball} Let $X\in\Alexnk$.
  \begin{enumerate}
    \item For $p\in X^\delta(\rho)$ and $r<\delta\rho$,
      \begin{align*}
         (1+\tau(\delta))\cdot\text{vol}(B_r(p))&=\vol{B_r(o,\Bbb R^n)}  =\vol{\Bbb S_1^{n-1}}\int_0^r t^{n-1}\,dt  \\
         &=2r\cdot\vol{B_r(o,\Bbb R^{n-1})}\int_{0}^{\frac\pi2}\sin^n(t)dt.\end{align*}
    \item \footnote{This formula, together with Lemma \ref{delta.lip}, fixes an error occurred in \cite{LR10} Lemma 1.4}
        For $x_1, x_2\in X^\delta(\rho)$ with $|x_1x_2|\le 2r<\delta\rho/5$,
        \begin{align*}
          &(1+\tau(\delta))\cdot
          \vol{B_r(x_1)\cup B_r(x_2)}
          \\
          &\qquad=\vol{B_r(o,\Bbb R^n)} +2r\cdot\vol{B_r(o,\Bbb R^{n-1})}\int_{\theta}^{\frac\pi2}\sin^n(t)dt,
        \end{align*}
        where $\theta=\cos^{-1}\left(\frac{|x_1x_2|}{2r}\right)$.
  \end{enumerate}
\end{lem}


%
%
%
%

The following theorem is an immediate consequence of Theorem 10.2 in \cite{BGP} and the fact that the Hausdorff measure is a continuous function in the Hausdorff distance between Alexandrov spaces that have the same dimension and the same lower curvature bound. Here we give a proof without using the volume continuity.

\begin{thm}[Almost maximum volume]\label{abs.max}

  Let $A\in\Alex^n(1)$. If $\vol A\ge \vol{\Bbb S_1^n}-\epsilon$, then there exists a $\tau(\epsilon)$-Gromov-Hausdorff approximation $h:A\to \Bbb S_1^n$. In particular, if $p\in X\in\Alexnk$ and $\vol{\Sigma_p(X)}\ge\vol {\Bbb S_1^{n-1}}-\delta$, then $p\in X^{\tau(\delta)}$.
\end{thm}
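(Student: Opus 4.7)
The plan is to argue by contradiction via Gromov's precompactness theorem and the equality case of the Bishop--Gromov inequality, and then to deduce the strainer statement from the resulting almost isometry with $\mathbb S_1^n$.

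Suppose the first part fails. Then there are $\epsilon_0>0$ and a sequence $X_i\in\Alex^n(1)$ with $\vol{X_i}\to \vol{\mathbb S_1^n}$ such that no map $h_i:X_i\to \mathbb S_1^n$ is simultaneously $\epsilon_0$-dense and $\epsilon_0$-almost isometric. Bishop--Gromov for Alexandrov spaces gives $\diam X_i\le\pi$ and $\vol{X_i}\le\vol{\mathbb S_1^n}$, so Gromov's precompactness theorem produces a subsequence converging in the Gromov--Hausdorff sense to some compact $X_\infty\in\Alex(1)$. The uniform positivity of $\vol{X_i}$ rules out collapse, so $\dim X_\infty=n$ and Hausdorff volume is continuous along the subsequence, forcing $\vol{X_\infty}=\vol{\mathbb S_1^n}$. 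The rigidity case of Bishop--Gromov then identifies $X_\infty$ with $\mathbb S_1^n$. Unwinding the Gromov--Hausdorff convergence, for large $i$ the canonical approximation maps $h_i:X_i\to \mathbb S_1^n$ are $\tau_i$-dense and $\tau_i$-almost isometric with $\tau_i\to 0$, contradicting the choice of $\epsilon_0$.

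For the ``in particular'' statement, apply the first part to $\Sigma_p\in\Alex^{n-1}(1)$ to obtain a $\tau(\delta)$-almost isometry $h:\Sigma_p\to\mathbb S_1^{n-1}$ with $\tau(\delta)$-dense image. Choose antipodal pairs $\xi_i^\pm\in \mathbb S_1^{n-1}$ ($i=1,\dots,n$) corresponding to the standard basis of $\mathbb R^n$, so that $|\xi_i^+\xi_i^-|=\pi$ and $|\xi_i^\pm \xi_j^\pm|=\pi/2$ for $i\neq j$. Lift to $\eta_i^\pm\in\Sigma_p$ with $|h(\eta_i^\pm)\,\xi_i^\pm|<\tau(\delta)$; the almost-isometry property then gives $|\eta_i^+\eta_i^-|>\pi-\tau(\delta)$ and $\bigl||\eta_i^\pm\eta_j^\pm|-\pi/2\bigr|<\tau(\delta)$. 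Extend each $\eta_i^\pm$ along a short geodesic of length $r\ll 1$ from $p$ in $A$ to reach points $a_i^\pm$; by the first variation formula together with the $\kappa$-triangle comparison, the comparison angles at $p$ are $(\tau(\delta)+\tau(r))$-close to the corresponding sphere angles, so $\{(a_i^+,a_i^-)\}_{i=1}^n$ is an $(n,\tau(\delta))$-strainer at $p$, and hence $p\in A^{\tau(\delta)}$.

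The step I expect to be the main obstacle is the non-collapsed volume continuity: Gromov--Hausdorff convergence alone yields only lower semicontinuity of Hausdorff measure, but the uniform lower bound on $\vol{X_i}$ keeps the limit $n$-dimensional, and the Burago--Gromov--Perelman stability machinery upgrades this to full volume continuity. This is precisely what feeds Bishop--Gromov rigidity and pins the limit down as $\mathbb S_1^n$, closing the contradiction.
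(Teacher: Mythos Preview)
Your argument is correct but takes a genuinely different route from the paper. The paper gives a direct, constructive proof: it builds an explicit distance non-decreasing map $h:X\to\mathbb S_1^n$ by iterating the inverse exponential map (the comparison map of \cite{BGP} 10.2), then checks $\tau(\epsilon)$-onto by bounding the volume of $\mathbb S_1^n\setminus h(X)$, and checks $\tau(\epsilon)$-almost isometry by an elementary perimeter inequality in $\mathbb S_1^n$ using that $\diam X\ge\pi-\tau(\epsilon)$. You instead run a contradiction through Gromov precompactness, non-collapsed volume continuity, and the exact maximum-volume rigidity. Your approach is conceptually tidy and generalizes well, but it imports heavier tools (volume convergence under GH limits and the rigidity case of Bishop--Gromov), whereas the paper's argument is self-contained and actually \emph{produces} the map $h$ rather than merely asserting its existence. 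One caution specific to this paper: the rigidity statement you invoke is essentially Theorem \ref{rel.max} here, which is proved \emph{using} the present theorem via Theorem \ref{main.thm}; so within this paper your route would be circular unless you cite the rigidity from an independent source. A small side remark: the bound $\diam X_i\le\pi$ is the Bonnet--Myers type diameter bound for curvature $\ge 1$, not a consequence of Bishop--Gromov per se. Your treatment of the ``in particular'' clause---pulling back an orthonormal frame through the almost isometry and realizing it by short geodesics to build an $(n,\tau(\delta))$-strainer---is fine and in fact spells out what the paper leaves implicit.
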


\begin{proof}
  We inductively define the distance non-decreasing map $h=h_n:X\to\Bbb S_1^n$. The case for $n=1$ is trivial. Let $p\in X$, then $\Sigma_p\in\Alex^{n-1}(1)$. Assume that $h_{n-1}: \Sigma_p\to \Bbb S_1^{n-1}$ is defined and is distance non-decreasing, then $h_n=(h_{n-1},id)\circ \exp_p^{-1}$ is defined by the composition (c.f. \cite{BGP} 10.2):
  $$\begin{CD}
    X@>\exp_p^{-1}>>
    C_1^\pi(\Sigma_p)@>(h_{n-1},id)>>
    C_1^\pi(\Bbb S_1^{n-1})=\Bbb S_1^n,
  \end{CD}$$
  where $C_1^\pi(\Sigma_p)$ is the spherical suspension of $\Sigma_p$. Clearly $h_n$ is also distance non-increasing.

  Let
  $\Omega=\Bbb S_1^n\setminus h(X)$. We have
  $$\vol \Omega=\vol{\Bbb S_1^n}-\vol{h(X)}
    \le \vol{S_1^n}-\vol X
    < \epsilon.
  $$
  Let $B_r\subset \Bbb S_1^n$ be the metric ball which is not contained in $h(X)$, i.e., $B_r\subseteq \Omega$. Then
  $$\epsilon>\vol \Omega\ge\vol{B_r}=\vol{\Bbb S_1^{n-2}}\cdot\int_0^r\sin^{n-2}(t)\,dt.$$
  Thus $r<\tau(\epsilon)$ and $h$ is a $\tau(\epsilon)$-onto.

  We now show that $h$ is a $\tau(\epsilon)$-isometry. Let $p,x\in X$, $\tilde p=h(p)$ and $\tilde x=h(x)\in\Bbb S_1^n$. It's clear that $|\tilde p\tilde x|_{\Bbb S_1^n}\ge|px|_X$. Let $q$ be a point in $X$ such that $|p\,q|_X=\underset{t\in X}\sup\{|p\,t|_X\}=L$ and $\tilde q=h(q)\in\Bbb S_1^n$. Because
  $$\vol{\Bbb S_1^n} -\epsilon
     \le \vol X
     \le \vol{B_L(o,\Bbb S_1^n)},
  $$
  we have $L\ge \pi-\tau(\epsilon)$. By the inequalities
  \begin{align*}
    2\pi
    &\ge |\tilde p\tilde x|_{\Bbb S_1^n}
    +|\tilde p\tilde q|_{\Bbb S_1^n}
    +|\tilde x\tilde q|_{\Bbb S_1^n}
    \ge |\tilde p\tilde x|_{\Bbb S_1^n}
    +|pq|_X+|xq|_X
    \\
    &\ge |\tilde p\tilde x|_{\Bbb S_1^n}
    +|pq|_X+(|pq|_X-|px|_X)
    = |\tilde p\tilde x|_{\Bbb S_1^n}
    +2L-|px|_X,
  \end{align*}
  We get that $|\tilde p\tilde x|_{\Bbb S_1^n}-|px|_X \le2\pi-2L<\tau(\epsilon)$.
\end{proof}

\section{Gluing dimensions and isometry on interiors}

In this section, we prove Lemma \ref{thmC.int.iso}. If not explicitly stated, the assumptions in this section will be the same as in Theorem \ref{main.thm}. For a minimal geodesic $\geod{pq}_X$ in $X$, we let $\geodic{pq}_X=\geod{pq}_X\setminus\{p\}$, $\geodci{pq}_X=\geod{pq}_X\setminus\{q\}$ and $\geodii{pq}_X=\geod{pq}_X\setminus\{p,q\}$. We first prove some basic properties (Lemmas \ref{delta.img} -- \ref{delta.lip}) for the map $f$.


\begin{lem}\label{delta.img}
$f(X^\delta)\subseteq Y^{\tau(\delta)}$. In particular, $f(\reg X)\subseteq \reg Y$.
\end{lem}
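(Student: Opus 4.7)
The plan is to use the volume hypothesis to force a small ball around $p:=f(\hat p)$ to have nearly maximal volume, deduce from Bishop--Gromov monotonicity in $\Alexnk$ that $\vol{\Sigma_p}$ is almost that of $\mathbb S_1^{n-1}$, and then invoke the Almost Maximum Volume Theorem (Theorem \ref{abs.max}) to conclude $p\in X^{\tau(\delta)}$.

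First I pass from the global identity $\vol Z=\vol X$ to a local one. Since $f$ is surjective and $1$-Lipschitz, and $1$-Lipschitz maps do not increase Hausdorff measure, for every Borel $B\subseteq X$ we have $\vol B=\vol{f(f^{-1}(B))}\leq\vol{f^{-1}(B)}$. Writing $Z=f^{-1}(B)\sqcup f^{-1}(X\setminus B)$ and applying the inequality to both $B$ and its complement forces equality throughout: $\vol{f^{-1}(B)}=\vol B$ for every Borel $B\subseteq X$. Taking $B=B_\epsilon(p)$ and noting that $B_\epsilon(\hat p)\subseteq f^{-1}(B_\epsilon(p))$ (since $f$ is $1$-Lipschitz with $f(\hat p)=p$) yields $\vol{B_\epsilon(p)}\geq\vol{B_\epsilon(\hat p)}$.

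Second, because $\hat p\in Z^\delta$, Theorem \ref{bgp9.4} gives that $B_\epsilon(\hat p)$ is $\tau(\delta)$-almost isometric to a domain in $\mathbb R^n$ whenever $\epsilon$ is small relative to the strainer size, so $\vol{B_\epsilon(\hat p)}\geq(1-\tau(\delta))\vol{B_\epsilon(\mathbb R^n)}$. The Bishop--Gromov volume comparison in $\Alexnk$ then says $\vol{B_\epsilon(p)}/\vol{B_\epsilon(S_\kappa^n)}$ is nonincreasing in $\epsilon$ with limit $\vol{\Sigma_p}/\vol{\mathbb S_1^{n-1}}$ as $\epsilon\to 0$. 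Shrinking $\epsilon$ further so that $\vol{B_\epsilon(S_\kappa^n)}\leq(1+\tau(\delta))\vol{B_\epsilon(\mathbb R^n)}$, the two bounds combine into
$$\frac{\vol{\Sigma_p}}{\vol{\mathbb S_1^{n-1}}}\;\geq\;\frac{\vol{B_\epsilon(p)}}{\vol{B_\epsilon(S_\kappa^n)}}\;\geq\;\frac{1-\tau(\delta)}{1+\tau(\delta)}\;\geq\;1-\tau(\delta),$$
so $\Sigma_p\in\Alex^{n-1}(1)$ has nearly maximal volume.

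Finally, Theorem \ref{abs.max} applied to $\Sigma_p$ yields $p\in X^{\tau(\delta)}$. The ``in particular'' assertion is then immediate: if $\hat p\in\reg Z=\scap{\delta>0}{} Z^\delta$, for any $\delta'>0$ we may pick $\delta$ so small that $\tau(\delta)<\delta'$, obtaining $p\in X^{\delta'}$, and hence $p\in\reg X$. I foresee no genuine obstacle; the only bookkeeping is to select $\epsilon$ small enough (in terms of $\delta$ and the strainer size) that the Euclidean approximation error from Theorem \ref{bgp9.4} and the space-form comparison error both feed into a common $\tau(\delta)$.
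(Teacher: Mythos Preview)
Your proof is correct and follows essentially the same route as the paper: both use $B_\epsilon(\hat p)\subseteq f^{-1}(B_\epsilon(p))$ together with volume preservation to get $\vol{B_\epsilon(p)}\ge\vol{B_\epsilon(\hat p)}\ge(1-\tau(\delta))\vol{B_\epsilon(\mathbb R^n)}$, then compare against the Bishop--Gromov upper bound $\vol{B_\epsilon(p)}\le\vol{\Sigma_p}\int_0^\epsilon\snk^{n-1}(t)\,dt$ to force $\vol{\Sigma_p}\ge(1-\tau(\delta))\vol{\mathbb S_1^{n-1}}$, and finish with Theorem~\ref{abs.max}. Your version is a bit more explicit in justifying the local volume identity $\vol{f^{-1}(B)}=\vol B$ and the ``in particular'' clause, but the argument is the same.
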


\begin{proof}
  Let $x\in X^\delta(\rho)$ and $y=f(x)$. For $\epsilon\ll\delta\rho$, because $f$ is volume preserving and $f^{-1}(B_\epsilon(y))\supseteq B_\epsilon(x)$, we have the following volume comparison:
  \begin{align*}
    \vol{\Sigma_y}\cdot\int_0^\epsilon \snk^{n-1}(t)\,dt
    &\ge \vol{B_\epsilon(y)}
    \\
    &= \vol{f^{-1}(B_\epsilon(y))}
    \ge \vol{B_\epsilon(x)}
    \\
    &=(1-\tau(\delta))\cdot\vol{\Bbb S_1^{n-1}}\cdot\int_0^\epsilon t^{n-1}\,dt.
  \end{align*}
  Let $\epsilon\to 0$. We get $\vol{\Sigma_y}\ge(1-\tau(\delta))\vol{\Bbb S_1^{n-1}}$. Thus $y\in Y^{\tau(\delta)}$ by Theorem \ref{abs.max}.
\end{proof}

Recall that $G_Y=\{y\in Y: |f^{-1}(y)|\ge1.\}$ and $G_X=f^{-1}(G_Y)$. For simplicity, we write $x=f^{-1}(y)$ if $y\notin G_Y$. We aim to show that $G_X\subseteq\partial X$. Using volume comparisons, we can show $G_X\subseteq X\setminus X^\delta$, that is, for any $y\in f(X^\delta)$, there is a unique $x\in X$ such that $f(x)=y$.

\begin{lem}\label{delta.inj}
Let $d_0=\max\{\diam(X_\ell)\}$, $v_0=\min\{\vol{X_\ell}\}$. There exists a constant $c=c(n,\kappa, d_0, v_0)$ such that if $0<\delta<c$, then $X^\delta\cap\, G_X=\varnothing$. Consequently, $f(X^\delta)=Y\setminus f(X\setminus X^\delta)$ is open in $Y$ and for any $A\subseteq X$, $f(A\setminus X^\delta)=f(A)\setminus f(X^\delta)$.
\end{lem}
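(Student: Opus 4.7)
The plan is to argue by contradiction: suppose some $\hat z \in Z^\delta$ is glued, so there exists $\hat z' \ne \hat z$ in $Z$ with $f(\hat z') = f(\hat z) = x$. I will show that for $\epsilon$ sufficiently small, the volume of $B_\epsilon(x)$ must simultaneously approximately equal $\vol{B_\epsilon(\mathbb R^n)}$ (since $x$ is almost regular) and exceed it by a definite amount (because its preimage contains two disjoint almost-Euclidean balls), which is impossible once $\delta$ is small.

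Fix strainer scales so that $\hat z \in Z^\delta(\rho)$ and, by Lemma \ref{delta.img}, $x = f(\hat z) \in X^{\tau(\delta)}(\rho')$. The distance $|\hat z \hat z'|_Z$ is either positive (if $\hat z,\hat z'$ lie in the same component $Z_\alpha$) or infinite (if they lie in different components), so for every $\epsilon$ smaller than $\frac{1}{2}\min\{\delta\rho,\tau(\delta)\rho',|\hat z\hat z'|_Z\}$ the balls $B_\epsilon(\hat z)$ and $B_\epsilon(\hat z')$ are disjoint. Both are contained in $f^{-1}(B_\epsilon(x))$ because $f$ is 1-Lipschitz, and volume preservation gives
$$\vol{B_\epsilon(x)} \;=\; \vol{f^{-1}(B_\epsilon(x))} \;\ge\; \vol{B_\epsilon(\hat z)} + \vol{B_\epsilon(\hat z')}.$$

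Theorem \ref{bgp9.4} at the strained points $\hat z$ and $x$ yields $\vol{B_\epsilon(\hat z)} \ge (1-\tau(\delta))\vol{B_\epsilon(\mathbb R^n)}$ and $\vol{B_\epsilon(x)} \le (1+\tau(\delta))\vol{B_\epsilon(\mathbb R^n)}$. For the extra ball around $\hat z'$, apply Bishop--Gromov volume comparison inside the component $Z_\alpha$ containing $\hat z'$: with $v_\kappa(r) = \vol{\mathbb S_1^{n-1}}\int_0^r \snk^{n-1}(t)\,dt$ and $B_{d_0}(\hat z') \supseteq Z_\alpha$,
$$\vol{B_\epsilon(\hat z')} \;\ge\; \frac{v_\kappa(\epsilon)}{v_\kappa(d_0)}\vol{Z_\alpha} \;\ge\; \frac{v_\kappa(\epsilon)}{v_\kappa(d_0)}\,v_0 \;\ge\; c_1\,\vol{B_\epsilon(\mathbb R^n)}$$
for small $\epsilon$, where $c_1 = c_1(n,\kappa,d_0,v_0) > 0$ is independent of $\delta$ and of $\hat z'$ (using $v_\kappa(\epsilon)/\vol{B_\epsilon(\mathbb R^n)} \to 1$ as $\epsilon \to 0$). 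Combining the three estimates and letting $\epsilon\to 0$ forces $\tau(\delta) \ge c_1/2$, which fails once $\delta < c$ for a suitable constant $c = c(n,\kappa,d_0,v_0)$; this proves $Z^\delta \cap G_Z = \varnothing$. The two stated consequences are then formal: compactness of $Z \setminus Z^\delta$ (the complement of an open set in the compact space $Z$) makes $f(Z \setminus Z^\delta)$ closed in $X$, so $f(Z^\delta) = X \setminus f(Z \setminus Z^\delta)$ is open, and the identity $f(A \setminus Z^\delta) = f(A) \setminus f(Z^\delta)$ follows immediately from the disjointness $f(Z^\delta) \cap f(Z \setminus Z^\delta) = \varnothing$ established above.

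The delicate point is securing a uniform lower bound $\vol{B_\epsilon(\hat z')} \ge c_1\vol{B_\epsilon(\mathbb R^n)}$ in which $c_1$ depends only on the global data $n, \kappa, d_0, v_0$ and not on the location of $\hat z'$; a purely local strainer estimate at $\hat z$ alone would not close the gap, and this is precisely why the ambient diameter $d_0$ and the volume floor $v_0$ must enter the threshold $c$.
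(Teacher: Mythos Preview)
Your proof is correct and follows essentially the same route as the paper: argue by contradiction, use Lemma~\ref{delta.img} to get $x\in X^{\tau(\delta)}$, separate the two preimage balls, bound $\vol{B_\epsilon(\hat z)}$ and $\vol{B_\epsilon(x)}$ via the strainer estimate and $\vol{B_\epsilon(\hat z')}$ via Bishop--Gromov against $v_0$ and $d_0$, then let $\epsilon\to 0$. Your explicit derivation of the two ``consequences'' (openness of $f(Z^\delta)$ and the set identity) is a welcome addition that the paper leaves to the reader.
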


\begin{proof}
  We argue by contradiction. Assume that $x_1\neq x_2$, $f(x_1)=f(x_2)=y$ and $x_1\in X^\delta$. By Lemma \ref{delta.img}, $y\in Y^{\tau(\delta)}$. Suppose that $x_2\in X_\ell$ and $\diam(X_\ell)=D_\ell$. Let $\epsilon>0$ be small so that $B_\epsilon(x_1)\cap B_\epsilon(x_2)=\varnothing$. By Bishop-Gromov relative volume comparison for Alexandrov spaces (\cite{BBI}, \cite{LR10}), we have
  \begin{align*}
    1&=\frac{\vol{f^{-1}(B_\epsilon(y))}}{\vol{B_\epsilon(y)}}
    \ge \frac{\vol{B_\epsilon(x_1)}+\vol{B_\epsilon(x_2)}}{\vol{B_\epsilon(y)}}
    \\
    &\ge \frac{\vol{B_\epsilon(x_1)} +\vol{X_\ell}\cdot\frac{\int_0^\epsilon \snk^{n-1}(t)\,dt}{\int_0^{D_\ell} \snk^{n-1}(t)\,dt}} {\vol{B_\epsilon(y)}}
    \\
    &\ge \frac{(1-\tau(\delta))\cdot\vol{\Bbb S_1^{n-1}}\cdot \int_0^\epsilon t^{n-1}\,dt +v_0\cdot\frac{\int_0^\epsilon \snk^{n-1}(t)\,dt}{\int_0^{d_0} \snk^{n-1}(t)\,dt}} {(1+\tau(\delta))\cdot\vol{\Bbb S_1^{n-1}}\cdot \int_0^\epsilon t^{n-1}\,dt}.
  \end{align*}
  Let $\epsilon\to 0$. We get
  $$1\ge \frac{(1-\tau(\delta))\cdot\vol{\Bbb S_1^{n-1}}
    +\frac{v_0}{\int_0^{d_0} \snk^{n-1}(t)\,dt}}
    {(1+\tau(\delta))\cdot\vol{\Bbb S_1^{n-1}}}.
  $$
  This is a contradiction for $\delta>0$ sufficiently small.
\end{proof}

\begin{lem}\label{delta.lip} There are $\delta,\rho>0$ sufficiently small so that the following holds. For any $y_1, y_2\in f(X^\delta(\rho))$ that satisfies $|y_1y_2|_Y<\delta\rho/20$, we have
$$|f^{-1}(y_1) f^{-1}(y_2)|_X<(1+\tau(\delta))\cdot|y_1y_2|_Y.$$
\end{lem}

\begin{proof}
  Let $x_i=f^{-1}(y_i)$, $i=1,2$ and $\lambda=\frac{|x_1x_2|_X}{|y_1y_2|_Y}$.
  Take $r=\min\left\{\frac12|x_1x_2|_X, |y_1y_2|_Y\right\}$ and consider the balls $B_r(y_1)$ and $B_r(y_2)$. Clearly, $r\le |y_1y_2|_Y<\delta\rho/10$. By
  the volume formula Lemma \ref{vol.2ball} (2), we get
\begin{align*}
  &(1+\tau(\delta))\cdot\vol{B_r(y_1)\cup B_r(y_2)}
  \\
  &\qquad=\vol{B_r(o,\Bbb R^n)}
  +2r\cdot\vol{B_r(o,\Bbb R^{n-1})}
  \int_{\theta}^{\pi/2}\sin^n (t)\,dt
  \\
  &\qquad=2r\cdot\vol{B_r(o,\Bbb R^{n-1})}
  \int_{0}^{\pi/2}\sin^n (t)\,dt
  +2r\cdot\vol{B_r(o,\Bbb R^{n-1})}
  \int_{\theta}^{\pi/2}\sin^n (t)\,dt,
\end{align*}
where $\theta=\cos^{-1}\left(\frac{|y_1y_2|_Y}{2r}\right)$.
Note that $r\le \frac12|x_1x_2|_X$. Thus $B_r(x_1)\cap B_r(x_2)=\varnothing$. We have
\begin{align*}
  &(1+\tau(\delta))\cdot\vol{B_r(x_1)\cup B_r(x_2)}
  \\
  &\qquad=2\vol{B_r(o,\Bbb R^n)}
  =4r\cdot\vol{B_r(o,\Bbb R^{n-1})}
  \int_{0}^{\pi/2}\sin^n (t)\,dt.
\end{align*}
Because $f$ is 1-Lipschitz, we have $f^{-1}(B_r(y_1)
\cup B_r(y_2))\supseteq B_r
(x_1)\cup B_r(x_2)$. Together with that $f$ is volume preserving, we get
\begin{align}
  1&=\frac{\vol{f^{-1}(B_r(y_1)\cup B_r(y_2))}}
  {\vol{B_r(y_1)\cup B_r(y_2)}}
  \geq\frac{\vol{B_r(x_1)\cup B_r(x_2)}}
  {\vol{B_r(y_1)\cup B_r(y_2)}}
  \notag\\
  &=(1-\tau(\delta))\frac{2\int_{0}^{\pi/2}\sin^n (t)\,dt}
    {\int_{0}^{\pi/2}\sin^n (t)\,dt
    +\int_{\theta}^{\pi/2}\sin^n (t)\,dt}.
  \label{delta.lip.e1}
\end{align}
We claim that $\frac12|x_1x_2|_X\le |y_1y_2|_Y$. If this is not true, then $r=|y_1y_2|_Y$. In this case $\theta=\frac\pi3$, which will yield a contraction when $\delta$ is small. Now $r=\frac12|x_1x_2|_X$ and $\cos\theta=\frac{|y_1y_2|_Y}{|x_1x_2|_X}$. Inequality (\ref{delta.lip.e1}) implies that $0<\theta<\tau(\delta)$ and thus $\frac{|x_1x_2|_X}{|y_1y_2|_Y}=\frac1{\cos\theta}<1+\tau(\delta)$.
\end{proof}

The following lemma follows immediately from Lemmas \ref{delta.img} -- \ref{delta.lip}.

\begin{lem}[Almost isometry]\label{delta.almost.iso}\quad

  \begin{enumerate}
    \item If $\geod{pq}_Y\subset f(X^\delta)$, then $\gamma=f^{-1}(\geod{pq}_Y)$, parameterized by arc length, is a Lipschitz curve satisfying $$\lh{\gamma|_{[t_1,t_2]}}< (1+\tau(\delta))\cdot|\gamma(t_1)\gamma(t_2)|_X.$$
    \item $f|_{X^\delta}$ is $(1+\tau(\delta))$-bi-Lipschitz. In particular, if geodesic $\geod{f(a)f(b)}_Y\subset f(X^\delta)$, then
        \begin{equation}\label{int.iso.e1}
          1\le\frac{|ab|_X}{|f(a)f(b)|_Y}< 1+\tau(\delta).
        \end{equation}
    \item $f(X^\delta)\subseteq Y^{\tau(\delta)}$ is open and dense in $Y$.
  \end{enumerate}
\end{lem}

We want to show that the continuous extension $f|_{\intl{X_\ell}}$ of $f|_{X_\ell^\delta}$ is an isometry. 
Given a Lipschitz curve $\gamma\subset \intl{X_\ell}$, it's easy to construct a sequence of piecewise geodesics $\sigma_\epsilon\subset Y$ such that $\sigma_\epsilon\to f(\gamma)$ and $\lh{\sigma_\epsilon}\to \lh{f(\gamma)}$. The assertion will be proved if for each small $\delta>0$, $\sigma_\epsilon$ can be selected so that $\sigma_\epsilon\subset f(X_\ell^\delta)$ and $f^{-1}(\sigma_\epsilon)\to \gamma$.

The main difficulty in finding such approximation is that $f(X_\ell^\delta)$ may not be convex in $f(\intl X)$. The following lemma is a basic tool of dimension comparison (compare to Example \ref{eg.cube.edge}).

\begin{lem}[Dimension comparison]\label{map.dim}
  Let $\Omega_0\subseteq X\in\Alexnk$ be a subset and $p\in X$ be a fixed point. Let $\Omega\subseteq X$ be a subset such that $d(p,\Omega)>0$. If for each $x\in \Omega_0$ there exists a geodesic $\geod{px}_X$ such that $\Omega\cap\geod{px}_X\neq\varnothing$,
  then
  $$\dim_H(\Omega)\ge\dim_H(\Omega_0)-1.$$
\end{lem}

\begin{proof}
  Let $\Gamma=\Omega\times[0,\infty)$, equipped with the metric
  $$d((x_1,t_1),(x_2,t_2))=|x_1x_2|_X+|t_1-t_2|,$$
  where $x_i\in\Omega$, $t_i\in[0,\infty)$, $i=1,2$.
  Define a map $h: \Omega_0\to\Gamma$, $x\mapsto (\bar x,|px|_X)$, where $\geod{px}_X$ is selected such that $\geod{px}_X\cap\Omega\neq\varnothing$ and $\bar x\in \geod{px}_X\cap\Omega$ is selected arbitrarily. It suffices to show that the map $h$ is co-Lipschitz, that is, $h(B_r(x))\supseteq B_{cr}(h(x))\cap h(B_{cr}(x))$ for any $x\in\Omega_0$ and any $r>0$. It's sufficient to find a constant $c>0$ such that for any $x_1,x_2\in \Omega_0$,
  \begin{align}
    |h(x_1)h(x_2)|_{\Gamma}\ge c\cdot|x_1x_2|_X.
    \label{map.dim.e1}
  \end{align}
  By Fubini's theorem,
  $$\dim_H(\Omega)+1\ge\dim_H(\Gamma)\ge \dim_H(\Omega_0).$$

  We prove (\ref{map.dim.e1}) by triangle comparisons. If geodesics $\geod{px_1}_X$ and $\geod{px_2}_X$ are equivalent (i.e., one lies on the other), then
  $$\frac{|h(x_1)h(x_2)|_{\Gamma}}{|x_1x_2|_X}
    =\frac{|\bar x_1\bar x_2|_X+||px_1|_X-|px_2|_X|}{|x_1x_2|_X}
    \ge \frac{||px_1|_X-|px_2|_X|}{|x_1x_2|_X} =1.
  $$
  If geodesics $\geod{px_1}_X$ and $\geod{px_2}_X$ are not equivalent.
  Note that $|p\bar x_1|_X$, $|p\bar x_2|_X\ge d_X(p,\Omega)>0$. We have
  $$\frac{|h(x_1)h(x_2)|_{\Gamma}}{|x_1x_2|_X}
    =\frac{|\bar x_1\bar x_2|_X+||px_1|_X-|px_2|_X|}{|x_1x_2|_X}
    \ge \frac{|\bar x_1\bar x_2|_X}{|x_1x_2|_X} \ge c(\kappa,d_X(p,\Omega), \diam(\Omega_0))>0.
  $$
\end{proof}

Now we prove a key lemma.

\begin{lem}[Dimensions of boundary gluing]\label{glue.dim}
Assume $G_X\neq\varnothing$. Let ${a}\in G_X$. For any $\delta, r>0$, we have $\dim_H(B_r({a})\setminus X^\delta)\ge n-1$. Moreover, $G_X\subseteq\partial X$ and thus $f(\intl X)=Y\setminus f(\partial X)$ is open.
\end{lem}

\begin{proof}
Let ${a}\neq {b}\in G_X$ with $f({a})=f({b})=p\in G_Y$. Not losing generality, assume ${a}\in X_1$ and ${b}\in X_\ell$ ($\ell$ may equal to $1$). Because $f$ is 1-Lipschitz, it's sufficient to consider the Hausdorff dimension for $f(B_r({a})\setminus X^\delta)$.

  \medskip

  \begin{center}\includegraphics[scale=1]{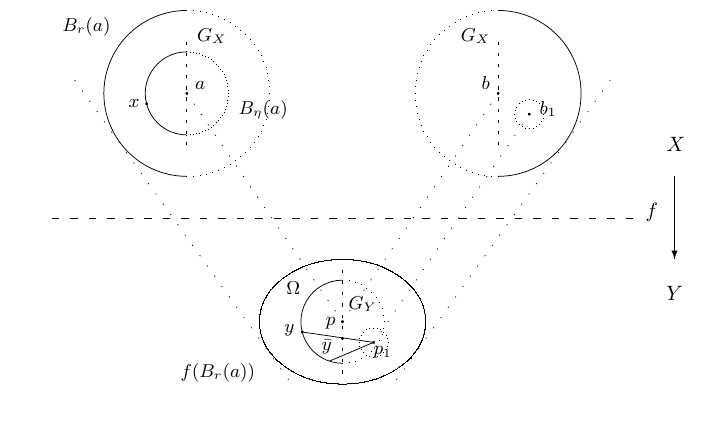} \end{center}

  \begin{center} Figure 4 \end{center}

  By Lemma \ref{delta.inj}, select $\delta>0$ small so that $f(X^\delta)\cap G_Y=\varnothing$, and thus $f(B_r({a})\setminus X^\delta)=f(B_r({a}))\setminus f(X^\delta)$. By Theorem \ref{os94}, for any $\eta>0$ small, there is ${b}_1\in \reg X_\ell$ with $|{b}{b}_1|_{X_\ell}<\eta$. By Lemma \ref{delta.img}, $p_1=f({b}_1)\in \reg Y$. Let $\Omega=f(B_{\eta}({a})\cap X_1^{\delta/2})$. It's clear that
  $$\dim_H(\Omega)=n.$$
  We first claim that for any $y\in \Omega$, there is a geodesic $\geod{yp_1}_Y$ such that $\geod{yp_1}_Y- f(X^\delta)\neq\varnothing$. If not so, then $\geod{yp_1}_Y\subset f(X^\delta)$. Let $x=f^{-1}(y)\in B_{\eta}({a})\cap X_1^{\delta/2}$. By the almost isometry of $f|_{X^\delta}$ (Lemma \ref{delta.almost.iso}), we get $|yp_1|_Y=(1-\tau(\delta))|x{b}_1|_X$. Consequently,
  \begin{align*}
    2\eta&\ge|x{a}|_X+|{b}{b}_1|_X \ge|yp|_Y+|pp_1|_Y
    \\
    &\ge |yp_1|_Y=(1-\tau(\delta))|x{b}_1|_X
    \ge (1-\tau(\delta))(|{a}{b}|_X-2\eta).
  \end{align*}
  This is a contradiction if $\delta$ and $\eta$ are sufficiently small.

  Take $\bar y\in\geod{yp_1}_Y\setminus f(X^\delta)$ which is closest to $y$ (see Figure 4). It's clear that $\bar y\neq y$. Moreover, $\bar y\notin f(X^\delta)$. This is because $f(X^{\delta})=Y\setminus f(X\setminus X^{\delta})$ is open in $Y$ (by Lemma \ref{delta.inj}) and thus $\geod{yp_1}_Y\setminus f(X^\delta)$ is closed. We claim that
  \begin{align}\bar y\in f(B_r({a})).\label{glue.dim.e0}\end{align}

  To prove the claim, we shall find a point $\bar x\in B_r({a})$ such that $\bar y=f(\bar x)$. By the construction, $\geodci{y\bar y}_Y\subset f(X^\delta)$. Let $y_i\in\geodci{y\bar y}_Y$ with $y_i\to \bar y$. By the almost isometry of $f$ (Lemma \ref{delta.almost.iso}), there are $x_i=f^{-1}(y_i)\in X^\delta$ such that
  \begin{equation}
    \label{glue.dim.e1}(1+\tau(\delta))|yy_i|_Y=|xx_i|_X.
  \end{equation}
  Passing to a subsequence, we let $\dsp\bar x=\lim_{i\to\infty} x_i$. Clearly, because $f$ is continuous, we have $f(\bar x)=\bar y$ and
  \begin{align}\label{glue.dim.e2}
    |x\bar x|_X=\lim_{i\to\infty}|xx_i|_X
    =(1+\tau(\delta))\lim_{i\to\infty}|yy_i|_Y=(1+\tau(\delta))|y\bar y|_Y.
  \end{align}
  Thus
  \begin{align*}
    |{a}\bar x|_X
    &\le|{a}x|_X+|x\bar x|_X\\
    &\le\eta+(1+\tau(\delta))|y\bar y|_Y
    \le\eta+(1+\tau(\delta))|yp_1|_Y\le (3+\tau(\delta))\eta.
  \end{align*}
  Choose $\eta>0$ small. We get $\bar x\in B_r({a})$.

  Now we let $\Omega_1$ be the collection of all of the above $\bar y$ selected for each $y\in \Omega$. Then $\Omega_1\subset f(B_r({a}))\setminus f(X^\delta)$. Note that $p_1\in \reg Y$. There is a small ball $B_\epsilon(p_1)\subset f(X^{\delta/2})$. Thus for any of the above selected $\bar y\notin f(X^\delta)$, we have $|\bar yp_1|_Y\ge \epsilon$. By the dimension comparison (Lemma \ref{map.dim}), we get
  \begin{align}
    \dim_H(\Omega_1)\ge\dim_H(\Omega)-1=n-1.\label{glue.dim.e3}
  \end{align}

  At last, by Corollary \ref{bgp10.6.1}, $\dim_H(\intl X\setminus X^\delta)\le n-2$, we conclude that $G_X\subseteq\partial X$.
\end{proof}

Now we use Lemmas \ref{map.dim} and \ref{glue.dim} to construct the desired perturbation of $f(\gamma)$. 

\begin{lem}\label{delta.adj}
  For any $p\in f(X^\delta)$ and $y\in Y$. If there is a convex neighborhood $U$ of $y$ in $Y$ such that
  $\geod{py}_Y\subset U\subseteq f(\intl{X_\ell})$, then for any $\epsilon>0$, there is $y'\in B_\epsilon(y)$ and a geodesic $\geod{py'}_Y$ such that $\geod{py'}_Y\subset f(X^{2\delta})$.
\end{lem}

\begin{proof}
  If the assertion is not true, then for any $y'\in B_\epsilon(y)$ and any geodesic $\geod{py'}_Y$,
  $\geod{py'}_Y\setminus f(X^{2\delta})\neq\varnothing$. Let $\Omega=\{\bar y\in \geod{py'}_Y\setminus  f(X^{2\delta}):y'\in B_\epsilon(y)\}$. Because $p\in f(X^\delta)$, there is a small ball such that $B_r(p)\subset f(X^{2\delta})$ and thus $d_Y(p,\Omega)\ge r$. By the dimension comparison lemma, we get
  \begin{equation}\dim_H(\Omega)\ge n-1.\label{delta.adj.e1}\end{equation}
  On the other hand, take $\epsilon>0$ small so that $B_\epsilon(q)\subset U$. Because $U$ is convex in $Y$, we have $\geod{py'}_Y\subset U\subseteq f(\intl{X_\ell})$ for all $y'\in B_\epsilon(y)$.
  Consequently, $\Omega\subset f(\intl{X_\ell})\setminus f(X^{2\delta})=f(\intl{X_\ell}\setminus X^{2\delta})$ by Lemma \ref{delta.inj}. Thus
  $$\dim_H(\Omega)\le \dim_H \left(f(\intl X\setminus X^{2\delta})\right)
    \le \dim_H\left(\intl X\setminus X^{2\delta}\right)\le n-2,$$
  which contradicts to (\ref{delta.adj.e1}).
\end{proof}

\begin{proof}[{\bf Proof of Lemma \ref{thmC.int.iso}}]
  Let $\gamma:[0,1]\to \intl X$ be a Lipschitz curve and $\sigma=f(\gamma)$. Clearly, $\sigma$ is also a Lipschitz curve since $f$ is 1-Lipschitz. It remains to show $\lh{\sigma}\ge \lh\gamma$. Note that by Lemma \ref{glue.dim}, $f(\intl X)$ is open. Then for each $y\in\sigma$, there is a convex neighborhood $U_y\subset f(\intl X)$. The existence of such convex neighborhood is referred to \cite{PP93} 4.3. Because $\sigma$ is compact in $Y$, there is a finite covering $\{U_{y_{2i}}\}_{i=0}^{N}$ of $\sigma$. Let $t_{2i}\ge 0$ so that $\sigma(t_{2i})=y_{2i}$. Not losing generality, we can assume that $0=t_0<t_2<\cdots<t_{2N}=1$, $\sigma(t_0)=y_0$, $\sigma(t_{2N})=y_{2N}$ and $\sigma\cap U_{y_{2i}}\cap U_{y_{2(i+1)}}\neq\varnothing$. Let $y_{2i+1}\in \sigma\cap U_{y_{2i}}\cap U_{y_{2(i+1)}}$, for $i=0,1,\cdots,N-1$. Then $0=t_0<t_1<t_2<\cdots<t_{2N}=1$ and we have
  $$\lh{\sigma}\ge\sum_{j=0}^{2N-1}|y_jy_{j+1}|_Y.$$

  Now we use Lemma \ref{delta.adj} to find a ``good" perturbation of $\cup\geod{y_jy_{j+1}}_Y$. First choose $y_0'\in f(X^{\delta/2^{2N}})\cap B_{\epsilon/2N}(y_0)\cap U_{y_0}$. By the convexity of $U_{y_0}$, we have $\geod{y_0'y_1}_Y\subset U_{y_0}\subset f(\intl X)$. By Lemma \ref{delta.adj}, there is $y_1'\in B_{\epsilon/2N}(y_1)\cap U_{y_0}\cap U_{y_2}$ such that $\geod{y_0'y_1'}_Y\subset f(X^{\delta/2^{2N-1}})$. By the convexity of $U_{y_2}$, we have $\geod{y_1'y_2}_Y\subset U_{y_2}\subset f(\intl X)$. Repeat the above process recursively for $j=1,2,\cdots, 2N$. We obtain a sequence $\{y_j'\}_{j=0}^{2N}$ with $y_j'\in B_{\epsilon/2N}(y_j)$ such that $\geod{y_j'y_{j+1}'}_Y\subset f(X^{\delta/2^{2N-(j+1)}})\subset f(X^\delta)$ for each $j$. Because $y_j'$ are $\epsilon/2N$-close to $y_j$, we have
  \begin{align*}
    \lh{\sigma}\ge\sum_{j=0}^{2N-1}|y_jy_{j+1}|_Y
    &\ge \sum_{j=0}^{2N-1}\left(|y'_jy'_{j+1}|_Y -\frac{2\epsilon}{2N}\right)
    =\sum_{j=0}^{2N-1}|y'_jy'_{j+1}|_Y-2\epsilon.
  \end{align*}
  Let $x_i=f^{-1}(y_i')$. By the almost isometry (Lemma \ref{delta.almost.iso}), we have $|y'_iy_{i+1}'|_Y=(1-\tau(\delta))|x_ix_{i+1}|_X$. Because $G_X\cap \intl X=\varnothing$, we have
  $\scup{i=0}{2N-1}\geod{x_ix_{i+1}}_X\to\gamma$ as $\epsilon\to 0$. Let $\epsilon\to 0$. We get
  \begin{align*}
    \lh{\sigma}
    \ge\liminf_{\epsilon\to 0}\sum_{i=0}^{2N-1}|y_i'y'_{i+1}|_Y
    \ge(1-\tau(\delta))\liminf_{\epsilon\to 0}\sum_{i=0}^{2N-1}|x_ix_{i+1}|_X
    \ge (1-\tau(\delta))\lh\gamma.
  \end{align*}
  The desired result is proved by taking $\delta\to 0$.
\end{proof}

\begin{cor}\label{int.map.quasi}
  Let $\gamma_i\subset \intl X$ be a sequence of geodesics. Suppose that $\gamma_i\to\gamma$. Then $f(\gamma)$ is a quasi-geodesic in $Y$. If $f|_\gamma$ is also injective, then $\lh\gamma=\lh{f(\gamma)}$. In particular, if $\geodii{pq}\subset \intl X$, then $f(\geod{pq})$ is a quasi-geodesic in $Y$.
\end{cor}

\begin{proof} Because $f|_{\intl X}$ is an isometry, $f(\gamma_i)$ are local geodesics. Thus $f(\gamma_i)$ are quasi-geodesics. Therefore $\dsp f(\gamma)=\lim_{i\to\infty}f(\gamma_i)$ is a quasi-geodesic. $\lh\gamma=\lh{f(\gamma)}$ follows from the fact that when quasi-geodesics converge, their lengths converge to the length of the limit quasi-geodesic.
\end{proof}

We would like to point out that the same technique does not work for a path $\gamma \subset \partial X_\ell$. See the following example.

\begin{example}\label{eg.bgy.conv}
Consider the gluing of a cylinder $X_1=\Bbb S^1\times[0,1]$ with two disks $X_2=X_3=\Bbb D^1$ (as caps), along their boundaries isometrically: $(\Bbb S^1\times\{0\})\sim \partial X_2$, $(\Bbb S^1\times\{1\})\sim \partial X_3$. We call the glued space $Y$ and let $f$ be the projection map. Fix a Lipschitz curve $\gamma\subset\Bbb S^1\times\{0\}\subset \partial X_1$. In the proof of Lemma \ref{thmC.int.iso}, $\sigma_\epsilon$ is selected to satisfy that $\dsp|\sigma_\epsilon(t_i)\gamma(t_i)|\ll \min_{j}\{|\gamma(t_j)\gamma(t_{j+1})|\}$. Because $f(X_1)$ is not convex in $Y$, such $\sigma_\epsilon $ in $Y$ would ``mainly'' stay in $f(X_2)$ and $f^{-1}(\sigma_\epsilon)\nsubseteq \partial X_1$. In fact, $f^{-1}(\sigma_\epsilon)$ will always converge (if they converge) to a curve in $\partial X_2$.

If $\sigma_\epsilon(t_i)=\gamma(t_i)$, by the same reason, the pre-image of the tangent vectors of $\sigma_\epsilon$ are in the tangent cones of points in $X_2$. Thus we can not see a clear connection between $\lh{f(\gamma)}$ and $\lh{\gamma}$ if $\gamma\subset \partial X_1$. See Remark \ref{rmk.df}.
\end{example}

\section{Length preserving along boundaries}

This section is dedicated to show that $f$ preserves the lengths of paths for all curves (Lemma \ref{thmC.L.pres}). If not explicitly stated, the assumptions in this section will be the same as in Theorem \ref{main.thm}.

We first construct a 1-Lipschitz map over the spaces of directions. For any $p\in Y$, we let
$$T_{f^{-1}(p)}(X)=\dsju{a\in f^{-1}(p)}{}T_a(X).$$
For each $a\in X_\ell$, we consider the maps $f_{\lambda_i,{a}}\colon(X_\ell,{a},\lambda_id_X) \to(Y,f({a}),\lambda_id_Y)$, where $\lambda_i\to\infty$. By Arzel\`a-Ascoli theorem, passing to a subsequence, $f_{\lambda_i,{a}}$ converges to a 1-Lipschitz map $df_a\colon T_{a}(X_\ell)\to T_{f({a})}(Y)$. We define $df_{f^{-1}(y)}\colon T_{f^{-1}(p)}\to T_p$ as the following: $df_{f^{-1}(p)}(v)= df_a(v)$ if $v\in T_a(X)$. Clearly, $df_{f^{-1}(p)}$ is 1-Lipschitz onto and volume preserving.

Because $f|_{\intl X}$ is an isometry and $f(\intl{X_{\ell_1}})\cap f(\intl{X_{\ell_2}})=\varnothing$ for $\ell_1\neq \ell_2$, we have that
\begin{align}
  &df_{a}|_{\intl{T_{a}(X)}} \text{ is an isometry;}\label{df.cp1}
  \\
  &\text{$df_{a}(\intl{T_{a}(X)})\cap df_b(\intl{T_{b}(X)})=\varnothing$ if ${a}\neq b$ and $f({a})=f(b)$.}\label{df.cp2}
\end{align}
Let $O_a$ denote the vertex of the tangent cone $T_a$. For each vector $v\in T_a(X)$, let $v_i\in \intl{T_a(X)}$ and $s_i\in\geod{O_av_i}$ such that $v_i\to v$ and $|O_a\,s_i|=\frac1i|vv_i|$. We have  $\geod{s_iv_i}\subset \intl{T_a(X)}$ and $\geod{s_iv_i}\to\geod{O_av}$ as $i\to\infty$. By Corollary \ref{int.map.quasi}, $\dsp df_a(\geod{O_av})=\lim_{i\to\infty}df_a(\geod{s_iv_i})$ is a quasi-geodesic in $T_p(Y)$. Moreover, $df_a|_{\geod{O_av}}$ is injective, because $df_a(\geod{s_iv_i})$ are interior of rays in $T_p(Y)$. Using the cone structure again, we get that the quasi-geodesic $\dsp df_a(\geod{O_av})$ coincides with the geodesic $\geod{O_p\, df_a(v)}$. Therefore by Corollary \ref{int.map.quasi},
\begin{align}
  |v|=\lh{\geod{O_av}}=\lh{df_a(\geod{O_av})} =\lh{\geod{O_p\, df_a(v)}}=|df_a(v)|.\label{df.cp3}
\end{align}
Let $$\Sigma_{f^{-1}(p)}(X)=\dsju{a\in f^{-1}(p)}{}\Sigma_a(X).$$
The restricted map (still called $df_{f^{-1}(p)}$)
\begin{align}
  df_{f^{-1}(p)}|_{\Sigma_{f^{-1}(p)}(X)}\colon \Sigma_{f^{-1}(p)}(X)\to\Sigma_p(Y)
  \label{df.sp0}
\end{align}
is 1-Lipschitz onto and inherits the properties (\ref{df.cp1}) and (\ref{df.cp2}), that is,
\begin{align}
  &df_{a}|_{\intl{\Sigma_{a}(X)}} \text{ is an isometry;}\label{df.sp1}
  \\
  &\text{$df_{a}(\intl{\Sigma_{a}(X)})\cap df_b(\intl{\Sigma_{b}(X)})=\varnothing$ if ${a}\neq b$ and $f({a})=f(b)$.}\label{df.sp2}
\end{align}
Consequently, for distinguished $a_j\in X$, $j=1,\dots,m$, that satisfy $f(a_j)=p$, we have
\begin{align}
  \sum_{j=1}^m\vol{\Sigma_{a_j}}\le\vol{\Sigma_p}.\label{df.vol0}
\end{align}
By (\ref{df.sp0}), the volume equality
\begin{align}
  \sum_{a\in f^{-1}(p)}\vol{\Sigma_{a}} =\vol{\Sigma_{f^{-1}(p)}}=\vol{\Sigma_p}\label{df.vol}
\end{align}
holds if one can show that $f^{-1}(p)$ has finite cardinality.

%
%


\begin{proof}[{\bf Proof of Theorem \ref{main.thm} (3)}]
  Let $p\in Y$ and $a_j\in X_{\ell_j}$ that satisfies $f(a_j)=p$. Let $D_{\ell_j}=\diam(X_{\ell_j})$.
  For each $j$,
  $$v_0\le\vol {X_{\ell_j}}\le \vol{\Sigma_{a_j}}\cdot\int_0^{D_{\ell_j}}\snk^{n-1}(t)\,dt \le\vol{\Sigma_{a_j}}\cdot\int_0^{d_0}\snk^{n-1}(t)\,dt.$$
  Summing up for $j=1,2,\cdots,m$, we get
  \begin{align*}
    m\cdot v_0
    &\le \sum_{j=1}^m\vol{\Sigma_{a_j}}\cdot\int_0^{d_0}\snk^{n-1}(t)\,dt.
  \end{align*}
  By the volume inequality (\ref{df.vol0}), we have
  $$\sum_{j=1}^m\vol{\Sigma_{a_j}}\le\vol{\Sigma_p}\le \vol{\Bbb S_1^{n-1}}.$$
  Thus
  $$m\cdot v_0\le \vol{\Bbb S_1^{n-1}}\cdot\int_0^{d_0}\snk^{n-1}(t)\,dt
    =\vol{B_{d_0}(o,\Bbb S_{\kappa}^n)}.
  $$
\end{proof}

\begin{remark}\label{rmk.df}
  Lemma \ref{thmC.L.pres} does not follow immediately from (\ref{df.cp3}). One of the issues is that, for a curve $\gamma\subset\partial X_\ell$, the construction of $df$ does not guarantee that $\dsp df_{\gamma(t)}\left(\gamma^+(t)\right)=\frac{d}{dt}\,f(\gamma(t))$. This is because $df_{\gamma(t)}\left(\gamma^+(t)\right)$ is defined only by the intrinsic geometry of $X_\ell$ that contains $\gamma$, that is, $f(X_\ell)$. However, $\dsp\frac{d}{dt}\,f(\gamma(t))$ is determined by all $f(X_\ell)$ that contain $f(\gamma)$ (see the non-isometric gluing Example \ref{eg.non.iso}). Let $\sigma(t)=f(\gamma(t))\subset Y$. By the definition, the tangent vector $\dsp\sigma^+(t)=\lim_{s\to 0}\drn_{\geod{\sigma(t)\sigma(t+s)}_Y}$ if it exists. However, the $df_{\gamma(t)}$-pre-images of $\drn_{\geod{\sigma(t)\sigma(t+s)}_Y}$ may not stay in $T_{\gamma(t)}(X_\ell)$ (see Example \ref{eg.bgy.conv}, $\gamma\subset\partial X_1$). On the other hand, so far we only know that $f(\geod{\gamma(t)\gamma(t+s)}_X)$ are quasi-geodesics connecting $\sigma(t)$ and $\sigma(t+s)$. In general, this is not sufficient to imply that their tangent vectors converge to $\dsp\frac{d}{dt}\,f(\gamma(t))$. In our case, one should try to approximate $\dsp\frac{d}{dt}\,f(\gamma(t))$ by vectors $v_i$ whose $df_{\gamma(t)}$-pre-images are vectors approximating $\gamma^+(t)$. As shown in Example \ref{eg.bgy.conv}, taking $v_i=\drn_{\geod{\sigma(t)\sigma(t+s)}_Y}$ would not work even if the gluing is by isometry. Lemmas \ref{delta.flat} and \ref{bdy.local.iso} are used to solve this issue.

 %
%
%
%
%
%
%
%
%
%
%
%
\end{remark}

\begin{lem}\label{delta.flat}
  For any $\delta>0$ small and ${a}\in\partial X_\ell$, $p=f({a})$, there is a small ball $B_r({a})$ in $X_\ell$ such that for any $x\in B_r({a})$ and $y=f(x)$,
  $$\left|df_x(\drn_{x}^{{a}})\drn_y^p\right|_{\Sigma_y}<\delta.$$
\end{lem}

It can happen that in any small neighborhood of ${a}$, there exits an $x$ such that $df_x(\drn_{x}^{{a}})\neq \drn_y^p$ (see Example \ref{eg.bgy.conv}, ${a}$ and $x$ are points in $\Bbb S^1\times\{0\}$).

\begin{proof}
  It's sufficient to show that for any $x_i\to a$ and $y_i=f(x_i)$,
  $$\left|df_{x_i}(\drn_{x_i}^{{a}})\drn_{y_i}^p\right|_{\Sigma_{y_i}} <\tau(|{a}x_i|).
  $$
  Take $s_i\in B_{{a}}(|py_i|^3)\cap\intl{X_\ell}$. We first show that $\left|\drn_{x_i}^{s_i}\drn_{x_i}^{a}\right|_{\Sigma_y}<\tau(|{a}x_i|)$. Consider the rescaling $\left(X_\ell, {a}, \frac{1}{|{a}x_i|}d\right)$, which Gromov-Hausdorff converge to the tangent cone $C(\Sigma_{{a}})$. Identify the points in $X_\ell$ and the corresponding points in its rescaling. Passing to a subsequence, let $O_{{a}}, \tilde x\in C(\Sigma_{{a}})$ be the limit points of ${a}$ and $x_i$ respectively. Clearly, $s_i\to O_{{a}}$.  Because the geodesic $\geod{O_{{a}}\tilde x}$ can be extended forward from $\tilde x$, we have that geodesics $\geod{s_ix_i}\to\geod{O_{{a}}\tilde x}$. Furthermore, there are $z_i\in X_\ell$ ($z_i$, after the rescaling, converge to a point on the extension of $\geod{O_{{a}}x_i}$), such that $\left|\drn_{x_i}^{s_i}\drn_{x_i}^{z_i}\right| \ge\tang{\kappa}{x_i}{s_i}{z_i}\ge\pi-\tau(|{a}x_i|)$ and $\left|\drn_{x_i}^{{a}}\drn_{x_i}^{z_i}\right| \ge\tang{\kappa}{{a}}{s_i}{z_i}\ge\pi-\tau(|{a}x_i|)$. Therefore, $\left|\drn_{x_i}^{s_i}\drn_{x_i}^{{a}}\right|<\tau(|{a}x_i|)$. Because $df_{f^{-1}(y_i)}$ is a 1-Lipschitz map, we have that
  \begin{equation}
    \left|df_{x_i}(\drn_{x_i}^{s_i})df_{x_i}(\drn_{x_i}^{{a}})\right|<\tau(|{a}x_i|).
    \label{delta.flat.e1}
  \end{equation}

  Now consider the rescaling $\left(Y, p, \frac{1}{|py_i|}d\right)$. Beacuse $|ax_i|\ge|py_i|\to 0$, the above sequence Gromov-Hausdorff converges to the tangent cone $C(\Sigma_p)$. Let $O_p, \tilde y\in C(\Sigma_p)$ be the limit points of $p$ and $y_i$ respectively. Clearly, $f(s_i)\to O_p$. By Corollary \ref{int.map.quasi}, $f(\geod{x_is_i})$ are quasi-geodesics. Thus they converge to a quasi-geodesic from $O_p$ to $\tilde y$ in $C(\Sigma_p)$, which coincides with $\geod{O_p\tilde y}$. Note that by the definition of $df_{x_i}$, $\drn_{x_i}^{s_i}$ is the tangent vector of $f(\geod{x_is_i})$ at $x_i$. By a similar argument as the above, we get that
  \begin{equation}
    \left|df_{x_i}(\drn_{x_i}^{s_i})\drn_{y_i}^{a})\right|<\tau(|py_i|)<\tau(|{a}x_i|).
    \label{delta.flat.e2}
  \end{equation}
  The desired estimate is obtained by combining (\ref{delta.flat.e1}) and (\ref{delta.flat.e2}).
\end{proof}

\begin{lem}\label{bdy.local.iso}
  Let $\delta>0$ be small. Then for any ${a}\in \partial X_\ell$, there is $r=r({a})>0$ such that for any ${b}\in B_{r}({a})$, if $f(b)\neq f(a)$, then
  $$1\ge\frac{|f({a})f({b})|_Y}{|{a}{b}|_{X_\ell}}\ge 1-\delta.$$
\end{lem}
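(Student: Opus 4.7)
The $1$-Lipschitz property of $f$ immediately gives the upper bound $|f(\hat a)f(\hat b)|_X\le|\hat a\hat b|_{Z_\alpha}$; the task is the lower bound. The plan is to integrate the first variation formula along the image of a minimal geodesic, using Lemma \ref{delta.flat} to control the integrand pointwise.

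Fix a unit-speed minimal geodesic $\hat\gamma:[0,L]\to Z_\alpha$ with $\hat\gamma(0)=\hat a$, $\hat\gamma(L)=\hat b$ and $L=|\hat a\hat b|_{Z_\alpha}$. Set $\sigma=f\circ\hat\gamma$ and $g(t)=|a\,\sigma(t)|_X$; then $g(0)=0$, and the lemma will follow by integration once we show $g'(t)\ge 1-\delta$ almost everywhere. After approximating $\hat b$ from $Z_\alpha^\circ$ (using continuity of $f$ and of the distance function), we may assume $\hat\gamma((0,L])\subset Z^\circ$. Corollary \ref{int.map.quasi} then makes $\sigma$ a quasi-geodesic in $X$, and Lemma \ref{thmA.int.iso} gives $L(\sigma|_{[s,t]})=t-s$, so $\sigma$ is unit speed. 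The first variation formula for quasi-geodesics yields
\begin{equation*}
g'(t)=-\cos\angle\bigl(\drn_{\sigma(t)}^{a},\,\sigma^+(t)\bigr)\qquad\text{for a.e.\ }t\in(0,L).
\end{equation*}

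Since $\hat\gamma$ is a geodesic in $Z_\alpha$, $\hat\gamma^+(t)$ and $\drn_{\hat\gamma(t)}^{\hat a}$ are antipodal in $\Sigma_{\hat\gamma(t)}$; since $\sigma$ is a quasi-geodesic in $X$, the images $\sigma^+(t)=df_{\sigma(t)}(\hat\gamma^+(t))$ and $df_{\sigma(t)}(\drn_{\hat\gamma(t)}^{\hat a})$ remain antipodal in $\Sigma_{\sigma(t)}$. The problem therefore reduces to the pointwise angle bound $|df_{\sigma(t)}(\drn_{\hat\gamma(t)}^{\hat a}),\,\drn_{\sigma(t)}^{a}|_{\Sigma_{\sigma(t)}}<2\delta$: combined with the antipodality above and the triangle inequality on $\Sigma_{\sigma(t)}\in\Alex^{n-1}(1)$, it forces $\angle(\drn_{\sigma(t)}^{a},\sigma^+(t))\ge\pi-2\delta$ and hence $g'(t)\ge 1-2\delta^2\ge 1-\delta$, yielding $g(L)\ge(1-\delta)L$ after integration.

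The hard part is this pointwise angle bound. When $\hat\gamma(t)\in\partial Z_\alpha$ it is precisely the conclusion of Lemma \ref{delta.flat} applied at $\hat q=\hat\gamma(t)\in U_{\hat a}$ (ensured by shrinking $r_0$ so that $\hat\gamma\subset U_{\hat a}$). For $\hat\gamma(t)\in Z_\alpha^\circ$ it is not covered by Lemma \ref{delta.flat}, and I would prove it by the same rescaling technique used in the proof of that lemma: rescale $(Z,\hat a)$ and $(X,a)$ by $1/|\hat\gamma(t)\hat a|$ and pass to the tangent cones $C_0(\Sigma_{\hat a})$ and $C_0(\Sigma_a)$. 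By Lemma \ref{thmA.spd.glue.hyp} the limit of the rescaled $f$'s is the cone extension of $df_a:\dsju{\beta}{}\Sigma_{\hat a_\beta}\to\Sigma_a$, and the inductive hypothesis (Theorem \ref{main.thm} in dimension $n-1$, applicable since $df_a$ is a volume-preserving shrinking onto $\Sigma_a\in\Alex^{n-1}(1)$) forces this cone extension to preserve the length of radial segments. Unwinding the rescaling gives the interior analog of Lemma \ref{delta.flat}, completing the angle estimate and hence the proof.
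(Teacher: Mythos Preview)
Your route differs from the paper's: the paper never integrates along a single curve but runs an infimum argument, building a broken path $\hat b\to\hat q\to\hat u\to\hat q_1\to\cdots$ that alternates a short quasi-geodesic step into $Z_\alpha^\circ$ with a return to $\partial Z_\alpha$ along the $f$-preimage of an $X$-geodesic. In that scheme Lemma~\ref{delta.flat} is invoked only at boundary points $\hat q\in\partial Z_\alpha$, where it has already been proved, and first variation is used only once per step, at that boundary point.

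Your argument has a genuine gap in the last paragraph. You need the angle bound at \emph{interior} points $\hat\gamma(t)\in Z_\alpha^\circ$ and claim it follows by the rescaling technique of Lemma~\ref{delta.flat}. But the contradiction hypothesis would be a \emph{lower} bound $\ge 2\delta$ on an angle, and such bounds do not survive the passage to the tangent cone: Theorem~\ref{bgp7.14} supplies only a $1$-Lipschitz map from $\lim\Sigma_{q_i}$ onto $\Sigma_{\bar q}$, under which angles can collapse to $0$. The paper's proof of Lemma~\ref{delta.flat} is arranged precisely to sidestep this, working instead with the preserved inequality $|\drn_q^a\,v|_{\Sigma_q}\le\pi-\delta$ and then exhibiting a direction $v$ in the cone that violates it. Your observation that ``the cone extension preserves radial segments'' is a statement about distances from the vertex, not about the angle at an off-vertex point, so it does not deliver the interior angle estimate. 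Two repairs are available: either run the paper's antipodal-direction trick (extend $\hat\gamma$ past $\hat\gamma(t)\in Z_\alpha^\circ$ to obtain $v=df(\hat\gamma^+(t))$ with $|df(\drn_{\hat\gamma(t)}^{\hat a})\,v|=\pi$, since $df$ is an isometry on $\Sigma_{\hat\gamma(t)}$, and then bound $|\drn_{\sigma(t)}^a\,v|$ from below by rescaling, where the inequality now goes the right way), or---more simply---note that once the rescaled limit $\bar f$ is the cone over $df_a$, preservation of radial distance already gives $|f(\hat q)\,a|_X/|\hat q\,\hat a|_{Z_\alpha}\to 1$ for every $\hat q\to\hat a$, which is the lemma itself, with no need for first variation or the pointwise angle bound at all.
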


\begin{proof}
  Let $B_{r_0}({a})\subset X_\ell$ be the neighborhood of ${a}$ chosen as in Lemma \ref{delta.flat} and take $r=r_0/10$.
  Let $p=f({a})$. Given ${b}\in B_{r}({a})$, and $q=f({b})\neq p$. It's sufficient to find a path $\gamma$ from ${b}$ to ${a}$ in $X_\ell$ such that $(1-\delta)\lh{\gamma}\le|pq|_Y$. Let $\mathcal I\subseteq [0,\,|pq|_Y]$ be the collection of $\omega_0\in [0,\,|pq|_Y]$ such that for each $\omega\in[\omega_0,|pq|_Y]$, there is a Lipschitz curve $\gamma:[0,T]\to X_\ell$ such that $\gamma(0)={b}$, $|p\,f(\gamma(T))|_Y=\omega$ and $(1-\delta)\lh{\gamma}\le|pq|_Y-\omega$. We claim that $\mathcal I=[0,\,|pq|_Y]$. Clearly, $\mathcal I\neq\varnothing$ since $|pq|_Y\in\mathcal I$ (take $\gamma\equiv\{b\}$ and $f(\gamma(T))=q$). $\mathcal I$ is also closed. Thus it suffices to show that $\mathcal I$ is open.

  Suppose $\omega_0\in\mathcal I$ and $\omega_0\neq 0$. There is a Lipschitz path $\gamma\subset X_\ell$ such that $\gamma(0)={b}$, $\gamma(T)={x}\in X_\ell$, $f(\gamma(T))=f({x})={y}\in Y$, $\omega_0=|p{y}|_Y>0$, and
  \begin{equation}
    |pq|_Y-|p{y}|_Y\ge(1-\delta)\lh{\gamma}.
    \label{bdy.local.iso.e0}
  \end{equation}
  \begin{center}\includegraphics[scale=1]{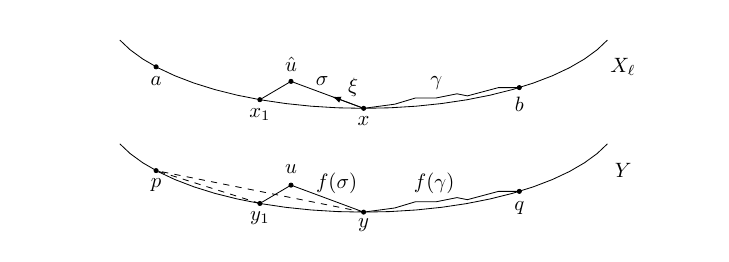}\end{center}

  \begin{center} Figure 5 \end{center}

  We want to find an extension of $\gamma$ toward ${a}$ which also satisfies (\ref{bdy.local.iso.e0}). If $\drn_{{x}}^{{a}}\,\notin\intl{\Sigma_{{x}}}$, we take a little perturbation $\xi\in\intl{\Sigma_{{x}}}$ with
  \begin{equation}
    |\xi \drn_{{x}}^{{a}}|_{\Sigma_{{x}}}<\delta.
    \label{bdy.local.iso.e01}
  \end{equation}
  Take a quasi-geodesic $\sigma\colon[0,\epsilon]\to X_\ell$ which satisfies $\sigma(0)={x}$, $\sigma^+(0)=\xi$ and $\sigma((0,\epsilon])\subset \intl{X_\ell}$. Let $\hat u=\sigma(\epsilon)\in \intl{X_\ell}$, $u=f(\hat u)\in f(\intl{X_\ell})$. Join $p$ and $u$ by a geodesic $\geod{pu}_Y$ in $Y$. Let ${y}_1\in\geod{pu}_Y\cap f(\partial X_\ell)$ which is closest to $u$. Because $f^{-1}(u)=\hat u\in \intl{X_\ell}$, there is ${x}_1\in\partial X_\ell$ and a geodesic $\geodci{\hat u{x}_1}_{X_\ell}\subset\intl{X_\ell}$ such that $\geod{u{y}_1}_Y=f(\geod{\hat u{x}_1}_{X_\ell})$. Consequently, we have
  ${y}_1=f({x}_1)$, and
  $$|\hat u{x}_1|_{X_\ell}=\lh{\geod{\hat u{x}_1}_{X_\ell}}=\lh{\geod{u{y}_1}_Y}
  =|u{y}_1|_Y.$$
  We claim that
  \begin{align}
    |p{y}|_Y-|p{y}_1|_Y\ge (1-\delta)(\epsilon+|u{y}_1|_Y)
    =(1-\delta)(\lh{\sigma}+|\hat u{x}_1|_{X_\ell}).
    \label{bdy.local.iso.e1}
  \end{align}
  This immediately implies that $|p{y}_1|_Y<|p{y}|_Y$. Summing up (\ref{bdy.local.iso.e1}) and (\ref{bdy.local.iso.e0}), we get
  \begin{align}
    |pq|_Y-|p{y}_1|_Y
    &\ge(1-\delta)(\lh{\gamma}+\lh{\sigma}+|\hat u{x}_1|_{X_\ell})
    \label{bdy.local.iso.e2}
    \\
    &=(1-\delta)\lh{\gamma\cup\sigma\cup\geod{\hat u{x}_1}_{X_\ell}},
    \notag
  \end{align}
  where $\gamma\cup\sigma\cup\geod{\hat u{x}_1}_{X_\ell}$ is a Lipschitz path connecting points in the following order: ${b}\to {x}\to \hat u\to{x}_1$. In fact, the above estimate is also valid if we replace ${y}_1$ by any point on $\sigma\cup\geod{\hat u{x}_1}_{X_\ell}$. Thus $\mathcal I$ is open.

  To see (\ref{bdy.local.iso.e1}), consider the triangle $\triangle p{y}u\subset Y$ which consists of $\geod{p{y}}_Y$, $\geod{pu}_Y$ and quasi-geodesic $f(\sigma)$ (by Corollary \ref{int.map.quasi}). Note that for $\delta>0$ small and for any path $\gamma$ satisfying (\ref{bdy.local.iso.e0}), we have
  \begin{align}
    |{a}{x}|_{X_\ell}
    &\le |{b}{x}|_{X_\ell}+|{a}{b}|_{X_\ell}
    \le \lh{\gamma}+r
    \label{bdy.local.iso.e001}
    \\
    &\le 2(|pq|_Y-|p{y}|_Y)+r
    \le 2|{b}{a}|_{X_\ell}+r\le 3r.
    \notag
  \end{align}
  Thus ${x}\in B_{4r}({a})\subset B_{r_0}({a})$. It's clear that $f(\sigma)^+(0)=df_s(\xi)$. By Lemma \ref{delta.flat},
  \begin{align}
    \left|f(\sigma)^+(0)\drn_{y}^p\right|_{\Sigma_{y}}
    &\le \left|f(\sigma)^+(0)\, df_{{x}}(\drn_{{x}}^{{a}})\right|_{\Sigma_{y}}
    +\left|df_{{x}}(\drn_{{x}}^{{a}})\drn_{y}^p\right|_{\Sigma_{y}}
    \label{bdy.local.iso.e01}
    \\
    &\le \left|\xi\, \drn_{{x}}^{{a}}\right|_{\Sigma_{{x}}}
    +\left|df_{{x}}(\drn_{{x}}^{{a}})\drn_{y}^p\right|_{\Sigma_{y}}
    <2\delta.
    \notag
  \end{align}
  Note that $\lh{f(\sigma)}=\lh{\sigma}=\epsilon$. By the first variation formula,
  $$|p{y}_1|_Y+|{y}_1u|_Y=|pu|_Y\le |p{y}|_Y-\cos(2\delta)\epsilon+\tau(\epsilon)\epsilon.$$
  Take $\epsilon>0$ small so that $\tau(\epsilon)<\frac12\delta$. Then for $\delta>0$ small, we obtain (\ref{bdy.local.iso.e1}):
  \begin{align*}
    |p{y}|_Y-|p{y}_1|_Y
    &\ge |u{y}_1|_Y+\cos(2\delta)\epsilon-\frac12\delta\epsilon
    \\
    &\ge |u{y}_1|_Y+(1-\delta)\epsilon
    \ge (1-\delta)(|u{y}_1|_Y+\epsilon).
  \end{align*}
Now we get that $0\in\mathcal I$. Let $\gamma:[0,T]\to X_\ell$ be a Lipschitz path which satisfies
$$f(0)=b , \quad f(\gamma(T))=p
  \text{\quad and \quad}
  |pq|_Y\ge(1-\delta)\lh{\gamma} .$$
It remains to show that $\gamma(T)={a}$. Since $f^{-1}(p)$ has finite cardinality (Theorem \ref{main.thm} (3)), we can take $r_0$ small enough so that $f^{-1}(p)\cap B_{r_0}({a})=\{{a}\}$. Thus it suffices to show that $\gamma(T)\in B_{r_0}({a})$. By a similar estimation as (\ref{bdy.local.iso.e001}), we get that
\begin{align*}
    |{a}\gamma(T)|_{X_\ell}
    &\le |{a}{b}|_{X_\ell}+|{b}\,\gamma(T)|_{X_\ell}+
    \le |{a}{b}|_{X_\ell}+\lh{\gamma}
    \\
    &\le |{a}{b}|_{X_\ell}+2|{a}{b}|_{X_\ell}
    \le 3r.
\end{align*}
Thus $\gamma(T)\in B_{4r}({a})\subset B_{r_0}({a})$.
\end{proof}

\begin{proof}[{\bf Proof of the path isometry (Lemma \ref{thmC.L.pres})}]

Let $\gamma\colon\mathcal I\to X_\ell$ be a curve. If $\gamma$ is rectifiable, we assume $\mathcal I=[0,1]$. For a non-rectifiable curve, it suffices to prove for the cases $\mathcal I=[0,1]$ and $\mathcal I=[0,\infty)$. Let $\rho, T>0$, where $T\in\mathcal I$. For each $a\in\gamma|_{[0,T]}$, there is a ball $B_r(a)$, $r<\rho$, that is either contained in $\intl{X_\ell}$ or satisfies Lemma \ref{bdy.local.iso}. The open intervals $\left(\gamma^{-1}(a)-\epsilon_a,\gamma^{-1}(a)+\epsilon_a\right)$, contained in the pre-images $\gamma^{-1}(B_r(a)\cap\gamma|_{[0,T]})$, form an open cover of the interval $[0,T]$. Thus there is a finite subcover of $[0,T]$, which corresponds to a finite subcover $\{B(a_i),\, i=0,\dots,N\}$ of $\gamma|_{[0,T]}$.  Not losing generality, assume that $a_i$ are points on $\gamma$ in the same order. Let $x_{2i}=a_i$ and $t_{2i}\in\mathcal I$ so that $\gamma(t_{2i})=x_{2i}$. Assume that $0=t_0<t_2<\cdots<t_{2N}=T$, where $\gamma(0)=x_0$, $\gamma(T)=x_{2N}$ and $\gamma|_{[0,T]}\cap B(x_{2i})\cap B(x_{2i+2})\neq\varnothing$. For $i=0,1,\cdots,N-1$, take $t_{2i+1}\in(t_{2i},t_{2i+2})$ such that $x_{2i+1}=\gamma(t_{2i+1})\in \gamma|_{[0,T]}\cap B(x_{2i})\cap B(x_{2i+2})$. Then $\cup\geod{x_i,x_{i+1}}$ is a polygonal approximation of $\gamma|_{[0,T]}$. Let $y_i=f(x_i)$. By Lemmas \ref{thmC.int.iso} and \ref{bdy.local.iso}, we have
 \begin{align*}
   \lh{f(\gamma)}&\ge\sum_{i=0}^{2N}|y_iy_{i+1}|_Y
   \ge (1-\delta)\sum_{i=0}^{2N}|x_ix_{i+1}|_{X_\ell}.
 \end{align*}

 If $\gamma$ is rectifiable, we take $T=1$. For any $\epsilon>0$, $\rho$ can be chosen small so that $\sum_{i=0}^{2N}|x_ix_{i+1}|_{X_\ell}\ge\lh{\gamma}-\epsilon$.  Let $\epsilon,\delta\to 0$, we get $\lh{f(\gamma)}\ge \lh{\gamma}$. If $\lh{\gamma|_{[0,T]}}=\infty$ for some $T>0$, then for such $T$ and any $L>0$, $\rho$ can be chosen small so that $\sum_{i=0}^{2N}|x_ix_{i+1}|_{X_\ell}>L$. Therefore, $f(\gamma)$ is also non-rectifiable. Suppose that $\gamma$ is non-rectifiable but $\lh{\gamma|_{[0,T]}}<\infty$ for every $T>0$. Then for any $L>0$, there is $T>0$ such that $\lh{\gamma|_{[0,T]}}>L$. For every such $T$, there is $\rho>0$ such that $\sum_{i=0}^{2N}|x_ix_{i+1}|_{X_\ell}>\lh{\gamma|_{[0,T]}}-1>L-1$. Thus $\lh{f(\gamma)}=\infty$.
\end{proof}

\begin{proof}[{\bf Proof of Theorem \ref{main.thm} (4)}]

Because $G_X\subseteq\partial X$ and $f$ is 1-Lipschitz, it suffices to show
\begin{align}
  \dim_H(B_r(f({a}))\cap G_Y)\ge n-1\label{pf.main.thm.e1}
\end{align}
and
\begin{align}
  \dim_H\left(\scup{m=3}{m_0}G_X^m\right)
  =\dim_H\left(\scup{m=3}{m_0}G_Y^m\right)
  \le n-2.\label{pf.main.thm.e2}
\end{align}

Estimate (\ref{pf.main.thm.e1}) is proved along the same line as the proof of Lemma \ref{glue.dim}. It suffices to show that assertion (\ref{glue.dim.e0}) can be strengthened to be $\bar y\in f(B_r({a}))\cap G_Y$. Then the set $\Omega_1\subset f(B_r({a}))\cap G_Y$ and (\ref{pf.main.thm.e1}) follows from (\ref{glue.dim.e3}). Because $p_1\in \reg Y$ and by Theorems \ref{bgp7.14} and \ref{pet98}, we see that $\geodic{yp_1}_Y\subset \reg Y$. In particular, $\bar y\in \reg Y$. By the volume equality (\ref{df.vol}), if $\bar y\notin G_Y$, then
$$\vol{\Sigma_{\bar x}}=\vol{\Sigma_{\bar y}}=\vol{\Bbb S_1^{n-1}}.$$
Therefore $\bar x\in \reg X$. This contradicts to the selection that $\bar y\notin f(X^\delta)$.

Now we prove (\ref{pf.main.thm.e2}). We claim that there is $\delta>0$ small so that for any $3\le m\le m_0$, if $y\in G_Y^m$, then $f^{-1}(y)\setminus X^{(n-1,\delta)}\neq\varnothing$. Thus by Theorem \ref{bgp10.6}, we get
$$\dim_H\left(\scup{m=3}{m_0}G_Y^m\right)
  \le \dim_H\left(X\setminus X^{(n-1,\delta)}\right)\le n-2.
$$

If the claim is not true, then $f^{-1}(y)=\{x_1,x_2,\cdots,x_m\}\subset X^{(n-1,\delta)}$. By Theorem \ref{bgp9.4}, $x_j\in X^{\tau(\delta)}$ or $x_j\in\partial X$. In either case, by Theorems \ref{bgp12.8} and \ref{bgp12.9.1}, we have $\vol{\Sigma_{x_j}}\ge\frac{1}{2}\vol{\Bbb S_1^{n-1}}-\tau(\delta)$. Thus
\begin{align*}
  \vol{\Bbb S_1^{n-1}}
  &\ge \vol{\Sigma_y}
  =\sum_{j=1}^m\vol{\Sigma_{x_j}}
  \\
  &\ge \sum_{j=1}^m\left(\frac{1}{2}\vol{\Bbb S_1^{n-1}}-\tau(\delta)\right)
  \ge \frac{m}{2}\vol{\Bbb S_1^{n-1}}-m\tau(\delta).
\end{align*}
This is a contraction for $m\ge 3$ and $\delta>0$ small.

It remains to show that $\dim_H\left(G_X^m\right)
  \le\dim_H\left(G_Y^m\right)$ for each $m\le m_0<\infty$. It suffices to show that $f$ is in a local $\frac12$-co-Lipschitz fashion. Then the result follows from the co-area formula for Alexandrov spaces (see \cite{AKP}). In fact, we prove that for each $y\in G_Y^m$, there exists $r_0=r_0(y)>0$ such that the following holds for any $0<r\le r_0$:
  $$\bigcup_{x_j\in f^{-1}(y)}f\left(B_r(x_j)\right)
    =f\left(B_r(f^{-1}(y))\right)\supset B_{r/2}(y).
  $$
  Argue by contradiction. Suppose that there exists a sequence $z_i\neq y$, $z_i\to y$, such that $d\left(f^{-1}(y), f^{-1}(z_i)\right)\ge 2|yz_i|$. Let $x_i\in f^{-1}(z_i)$. Passing to a subsequence, assume that $x_i$ converge to a point $x\in X$. Because $f$ is continuous, $x\in f^{-1}(y)$. Let $\delta=1/10$ and $r_0=r(x)$ be selected as in Lemma \ref{bdy.local.iso}. For $i$ large, we have $x_i\in B_{r_0}(x)$ and $x_i\neq x$. Therefore,
  \begin{align*}
    |yz_i|&=|f(x)f(x_i)|\ge (1-\delta)|xx_i|
    \ge (1-\delta)d\left(f^{-1}(y), f^{-1}(z_i)\right)\\&\ge 2(1-\delta)|yz_i|
    >|yz_i|.
  \end{align*}
\end{proof}

\begin{proof}[{\bf Proof of Theorem \ref{thmC.spd.glue}}]
  We see that the map $df_{f^{-1}(p)}\colon \Sigma_{f^{-1}(p)}(X)\to\Sigma_p(Y)$ is 1-Lipschitz onto and volume preserving. Moreover, by Theorem \ref{main.thm} (3), $\Sigma_{f^{-1}(p)}(X)$ consists of finitely many Alexandrov spaces. The desired result is obtained by a direct application of Theorem \ref{main.thm}.
\end{proof}

In the end of this section, we prove the following extra properties for the gluing points.

\begin{prop}\label{spd.glue.cor}\quad
  \begin{enumerate}
    \item $f(X^\delta)\subseteq Y^\delta\setminus G_Y\subseteq f(X^{\tau(\delta)})$ for $\delta>0$ small. In particular, $f(\reg X)=\reg Y\setminus G_Y$.
    \item $\partial Y\subseteq f(\partial X)$.
  \end{enumerate}
\end{prop}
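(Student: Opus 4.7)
The plan is to combine the essentially isometric behavior of $f$ on the interior (Theorem \ref{main.thm}(ii)) with the gluing of spaces of directions (Theorem \ref{thmA.spd.glue}) and the Almost Maximum Volume Theorem (Theorem \ref{abs.max}).

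For the first inclusion in (i), I first note that $Z^\delta \subseteq Z^\circ$ once $\delta$ is small: Theorem \ref{bgp9.4} provides an almost Euclidean chart at any $(n,\delta)$-strained point, which is incompatible with being on the boundary. Now, given $\hat z \in Z^\delta$, Lemma \ref{delta.inj} gives $\hat z \notin G_Z$, so $f(\hat z) \notin G_X$. Since $\hat z \in Z_\alpha^\circ$ for some $\alpha$, and $f|_{Z_\alpha^\circ}$ is an isometric embedding by Theorem \ref{main.thm}(ii), I can choose $(n,\delta)$-strainers at $\hat z$ lying in $Z_\alpha^\circ$, and they transport via $f$ to $(n,\delta)$-strainers for $f(\hat z)$. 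Hence $f(\hat z) \in X^\delta \setminus G_X$.

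For the second inclusion in (i), take $x \in X^\delta \setminus G_X$, so $f^{-1}(x) = \{\hat z\}$ is a single point. The $(n,\delta)$-strained condition together with Theorem \ref{bgp9.4} yields $\vol{\Sigma_x} \ge (1-\tau(\delta))\vol{\mathbb S_1^{n-1}}$, and the volume-preserving gluing of spaces of directions (Theorem \ref{thmA.spd.glue}, Lemma \ref{thmA.spd.glue.hyp}) forces $\vol{\Sigma_{\hat z}} = \vol{\Sigma_x}$; Theorem \ref{abs.max} then places $\hat z$ in $Z^{\tau(\delta)}$. The identity $\reg X \setminus G_X = f(\reg Z)$ is obtained by intersecting the inclusions of (i) over all $\delta > 0$ and using that preimages off $G_X$ are unique.

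For (ii), suppose $x \in \partial X$. If $x \in G_X$ then Theorem \ref{main.thm}(i) places $f^{-1}(x)$ in $G_Z \subseteq \partial Z$, so $x \in f(\partial Z)$. Otherwise $f^{-1}(x) = \{\hat z\}$ is a single point, and by Lemma \ref{thmA.spd.glue.hyp} the induced $df_x\colon \Sigma_{\hat z} \to \Sigma_x$ is a volume-preserving shrinking between $(n-1)$-dimensional Alexandrov spaces. Applying Theorem \ref{main.thm} one dimension down (the inductive hypothesis), $\Sigma_x$ is obtained from $\Sigma_{\hat z}$ by a gluing along $\partial \Sigma_{\hat z}$. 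If $\hat z$ were in $Z^\circ$ then $\partial \Sigma_{\hat z} = \varnothing$, the gluing is trivial, $df_x$ is an isometry, and $\Sigma_x$ has no boundary, contradicting $x \in \partial X$. Hence $\hat z \in \partial Z$, completing the proof. The main obstacle is conceptual bookkeeping rather than a genuinely hard step: the argument must sit at the right point of the dimension induction so that Theorem \ref{main.thm}, Theorem \ref{thmA.spd.glue}, and the dimension-$(n-1)$ instance of Theorem \ref{main.thm} are simultaneously available.
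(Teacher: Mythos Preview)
Your proof is correct and follows essentially the same route as the paper. For the second inclusion in (i) you invoke Theorem \ref{thmA.spd.glue} to get $\vol{\Sigma_{\hat z}}=\vol{\Sigma_x}$ and then Theorem \ref{abs.max}, exactly as the paper does; for (ii) the paper argues the contrapositive $f(Z^\circ)\subseteq X^\circ$ (using that $\partial\Sigma_{\hat z}=\varnothing$ forces $\Sigma_x\cong\Sigma_{\hat z}$ via Corollary \ref{thmA.vol.iso}(i) one dimension down), while you split into the cases $x\in G_X$ and $x\notin G_X$ --- but the substantive step is identical. Your treatment of the first inclusion in (i), transporting strainers through the isometric embedding $f|_{Z_\alpha^\circ}$, is a legitimate unpacking of what the paper records as ``clear by Theorem \ref{main.thm}''.
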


\begin{proof}
  (1) Clearly, $f(X^\delta)\subseteq Y^\delta\setminus G_Y$. For any $y\in Y^\delta\setminus G_Y$, let $x\in X$ such that $f(x)=y$. By Theorem \ref{thmC.spd.glue},
  $$\vol{\Sigma_{x}}=\vol{\Sigma_y}\ge\vol{\Bbb S_1^{n-1}}-\tau(\delta).$$
  By Theorem \ref{abs.max}, we have $x\in X^{\tau(\delta)}$.

  (2) It's equivalent to show that $f(\intl X)\subseteq \intl Y$. Let $x\in \intl X$ and $y=f(x)$. We shall show that $\partial\Sigma_y=\varnothing$. By the gluing of spaces of directions, $\Sigma_{f^{-1}(y)}$ and $\Sigma_y$ satisfy the assumption as in Theorem \ref{main.thm}. Note that $\partial\Sigma_{x}=\varnothing$. By Corollary \ref{thmC.vol.iso}, $\Sigma_y$ is isometric to $\Sigma_{x}$, which has an empty boundary.
\end{proof}



\section{Applications}

Let $p_i\in X\in\Alexnk$ and $p_i\to p$. It is known that $\dsp\liminf_{i\to\infty}\Sigma_{p_i}\ge\Sigma_p$ (Theorem \ref{bgp7.14}). A natural question to ask is, when do we have $\dsp\lim_{i\to\infty}\Sigma_{p_i}=\Sigma_p$? This is true in a special case that $p_i$ are interior points along the same geodesic (Theorem \ref{pet98}). Using the LV-rigidity theorem, we prove the following theorem.





\begin{thm}[LV-rigidity of spaces of directions]\label{shrink.spd}
  Let $X_i\in\Alexnk$. Suppose that $(X_i, p_i)\overset{d_{GH}}{\longrightarrow}(X, p)\in\Alexnk$ and $\dsp\lim_{i\to\infty}\Sigma_{p_i}$ exists. Then $\dsp\lim_{i\to\infty}\Sigma_{p_i}=\Sigma_p$ if and only if  $\dsp\lim_{i\to\infty}\vol{\Sigma_{p_i}}=\vol{\Sigma_p}$.
\end{thm}


\begin{proof}
  If $\dsp\lim_{i\to\infty}\Sigma_{p_i}=\Sigma_p$, then $\dsp\lim_{i\to\infty}\vol{\Sigma_{p_i}}=\vol{\Sigma_p}$ follows from the continuity of volumes in Alexandrov spaces. Now we assume $\dsp\lim_{i\to\infty}\Sigma_{p_i}=\Sigma$ and $\dsp\lim_{i\to\infty}\vol{\Sigma_{p_i}}=\vol{\Sigma_p}$. We shall show that $\Sigma_p$ is isometric to $\Sigma$.

  Let $g\colon \Sigma_p\to\Sigma$ be the distance non-decreasing map defined as in Theorem \ref{bgp7.14}. We repeat the definition here. It's sufficient to define $g$ over the directions where geodesics can go out. Let $v=\drn_{\geod{pq}}$ be a such direction. Take $q_1\in\geod{pq}$ be the midpoint. There is a subsequence of $\drn_{\geod{p_iq_1}}$ (still called $\drn_{\geod{p_iq_1}}$) Gromov-Hausdroff converges to a direction in $\Sigma$. We define $\dsp g(v)=\lim_{i\to\infty}\drn_{\geod{p_iq_1}}$. Note that $g$ may depend on the selection of the subsequence. No matter which subsequence is selected, since $\geod{p_iq_1}\to\geod{pq_1}$, we always have $|g(v_1)g(v_2)|\ge |v_1v_2|$ (\cite{BGP} Lemma 2.8.1).

  For the above $g$, we show that $g(\intl{\Sigma_p})\subseteq\intl\Sigma$. Let $v\in\intl{\Sigma_p}$ be a direction pointing to the interior of $X$. It suffices to show that $|g(v)\xi|_\Sigma>0$ for any direction $\xi\in\partial\Sigma$. By Perel'man's stability theorem, there are $\xi_i\in \partial\Sigma_{p_i}$ such that $\xi_i\to\xi$. For each $i$, let $\gamma_i(t)$ be a quasi-geodesic going out from $p_i$ in the direction $\xi_i$. Because $\partial X_i$ is an extremal subset, we have $\gamma_i\subset\partial X_i$. Let $i\to\infty$. There is a subsequence of $\gamma_i$ (still called $\gamma_i$) that converges to a quasi-geodesic $\gamma\subset\partial X$ going out from $p$. This implies that $\gamma^+(0)\in\partial\Sigma_p$. Let $\epsilon>0$ be small and $q\in\intl X$ so that $|v\drn_{\geod{pq}}|<\epsilon$. Take $q_1\in\geod{pq}$ be the midpoint. Choose a subsequence of $p_i$ such that $\dsp g(v)=\lim_{i\to\infty}\drn_{\geod{p_iq_1}}$. Because $\geod{p_iq_1}\to\geod{pq_1}$ and the geodesic from $p$ to $q_1$ is unique, for $\epsilon>0$ sufficiently small, we have
  \begin{align*}
    |g(v)\xi|_\Sigma &=\lim_{i\to\infty}|\drn_{\geod{p_iq}}\gamma_i^+(0)|
    \\
    &\ge|\drn_{\geod{pq}}\gamma^+(0)|
    \ge d(\drn_{\geod{pq}},\partial\Sigma_p)\ge d(v,\partial\Sigma_p)-\epsilon>0.
  \end{align*}

  In our case, using $\vol{\Sigma_p}=\vol\Sigma$, together with that $\Sigma_p$, $\Sigma$ are both compact, the map $g^{-1}$ can be extended to a 1-Lipschitz onto map $f\colon \Sigma\to\Sigma_p$. Noting that $g(\intl{\Sigma_p})\subseteq\intl\Sigma$, we have $f(\partial\Sigma)\subseteq\partial\Sigma_p$ if $\partial\Sigma\neq\varnothing$. Then by Corollary \ref{thmC.vol.iso}, $\Sigma_p$ is isometric to $\Sigma$.

\end{proof}

Using the LV-rigidity theorem and the above theorem, we are able to classify the Alexandrov spaces which achieve/almost achieve their relatively maximum volume. Let $C_{\kappa}(\Sigma)$ be the $\kappa$-cone (see \cite{BGP} $\S4$) over $\Sigma\in\Alex^{n-1}(1)$. Let  $C_{\kappa}^r(\Sigma)=B_r(O,C_{\kappa}(\Sigma))$ be the $r$-ball in $C_{\kappa}(\Sigma_p)$ centered at the vertex $O$, and $\bar C_{\kappa}^r(\Sigma)$ be its closure. Let $\Sigma\times\{r\}=\{x\in C_{\kappa}(\Sigma): |Ox|=r\}$ denote the $r$-cross section in $C_{\kappa}(\Sigma)$.

\begin{thm}[Relatively maximum volume]\label{rel.max}
Let $p\in X\in \Alexnk$. For any $0<r<R$, if the equality in the Bishop-Gromov relative volume comparison
$$\frac{\vol{B_R(p)}}{\vol{B_r(p)}}
\le\frac{\vol{B_R(o,\Bbb S_{\kappa}^n)}}{\vol{B_r(o,\Bbb S_{\kappa}^n)}}$$
holds, then the metric ball $B_R(p)$ is isometric to $C_{\kappa}^R(\Sigma_p)$ with respect to their intrinsic metrics. If $X=\overline{B_R(p)}$, then
\begin{enumerate}
  \item $R\le\frac{\pi}{2\sqrt\kappa}$ or $R=\frac{\pi}{\sqrt\kappa}$ for $\kappa>0$;
  \item $X$ is isometric to a self-glued space $\bar C^R_\kappa(\Sigma_p)/(a\sim \phi(a))$, where $\phi: \Sigma_p\times \{R\}\to\Sigma_p\times \{R\}$ is an isometric involution;
  \item\footnote{It was kindly pointed out to us by the referee, that in \cite{LR10}, the statement ``if $X$ is a topological manifold, then $X$ is homeomorphic to $\Bbb S^n$ or $\Bbb{RP}^n$" is incorrect.} if $X$ is a topological manifold, then $X$ is homeomorphic to $\Bbb S^n$ or homotopy equivalent to $\Bbb{RP}^n$.
\end{enumerate}
\end{thm}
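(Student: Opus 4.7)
The plan is to analyse the $\kappa$-cone exponential map $\mathrm{gexp}_p:\bar C^R_\kappa(\Sigma_p)\to\bar B_R(p)$, $(\xi,t)\mapsto\gamma_\xi(t)$, where $\gamma_\xi$ is a minimal geodesic from $p$ with initial direction $\xi\in\Sigma_p$. By Toponogov, $\mathrm{gexp}_p$ is $1$-Lipschitz for the ambient $\kappa$-cone metric and onto because every point of $\bar B_R(p)$ lies on a minimal geodesic from $p$. Bishop--Gromov monotonicity combined with the hypothesised equality at $(r,R)$ propagates to equality for every $0<s\le R$; letting $s\to 0^+$ gives
$$\vol{B_R(p)}=\vol{\Sigma_p}\cdot\int_0^R\snk^{n-1}(t)\,dt=\vol{\bar C^R_\kappa(\Sigma_p)},$$
so $\mathrm{gexp}_p$ preserves volume. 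Whenever $\bar C^R_\kappa(\Sigma_p)$ is itself Alexandrov --- automatic for $\kappa\le 0$, and for $\kappa>0$ when $R\le\pi/(2\sqrt\kappa)$ or $R=\pi/\sqrt\kappa$ --- Theorem~B applies to $\mathrm{gexp}_p$, realising $\bar B_R(p)$ as a gluing of $\bar C^R_\kappa(\Sigma_p)$; Theorem~A(i) confines the gluing to the boundary, so $\mathrm{gexp}_p$ is injective on the open interior and the first conclusion follows.

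Now assume $X=\bar B_R(p)$, and first derive (i) by ruling out $\pi/(2\sqrt\kappa)<R<\pi/\sqrt\kappa$ with $\kappa>0$. After approximating by the dense regular set $\reg{\Sigma_p}$ (Theorem~\ref{os94}), pick directions $q_1,q_2\in\Sigma_p$ with $|q_1q_2|_{\Sigma_p}$ arbitrarily close to $\pi$ and set $A=\gamma_{q_1}(R)$, $B=\gamma_{q_2}(R)$. The triangle $pAB$ has two sides of length $R$ from $p$ and included angle $|q_1q_2|_{\Sigma_p}$; Toponogov hinge comparison in $\mathbb S^n_\kappa$ bounds $|AB|_X$ by the spherical hinge length, which tends to $2\pi/\sqrt\kappa-2R$ as $|q_1q_2|_{\Sigma_p}\to\pi$. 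On the other hand, the first conclusion applied at intermediate radii $R'<R$ identifies $|\gamma_{q_1}(R')\gamma_{q_2}(R')|_{B_R(p)}$ with the intrinsic cone-ball distance between $(q_1,R')$ and $(q_2,R')$; the latter is realised along the rescaled boundary arc and tends to $\pi\,\snk(R)$ as $R'\to R^-$ and $|q_1q_2|_{\Sigma_p}\to\pi$. Setting $u=\sqrt\kappa R$, the two bounds combine to $\pi\sin u+2u\le 2\pi$ on $u\in(\pi/2,\pi)$; but the concave function $\pi\sin u+2u$ equals $2\pi$ at both endpoints $u=\pi/2,\pi$ and is strictly larger in between, a contradiction. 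This establishes (i).

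Knowing (i), $\bar C^R_\kappa(\Sigma_p)$ is Alexandrov and Theorem~B now genuinely applies, displaying $X$ as a self-gluing of $\bar C^R_\kappa(\Sigma_p)$ along its boundary. Any identifications occur on the ``bottom'' $\Sigma_p\times\{R\}$ (if $p\in\partial X$ the side $\partial\Sigma_p\times[0,R]$ also lies on the cone boundary, but it remains as $\partial X$ after gluing). To show the gluing multiplicity is at most two: at a regular bottom point $(q,R)$ with $\Sigma_q=\mathbb S^{n-2}_1$, the space of directions inside the cone is a closed hemisphere of volume $\tfrac12\vol{\mathbb S^{n-1}_1}$; if $m\ge 3$ such points were identified, Theorem~\ref{thmA.spd.glue} would produce $\Sigma_{y^\ast}\in\Alex^{n-1}(1)$ of volume $\ge\tfrac m2\vol{\mathbb S^{n-1}_1}>\vol{\mathbb S^{n-1}_1}$, impossible. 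Therefore the gluing is encoded by an involution $\phi:\Sigma_p\times\{R\}\to\Sigma_p\times\{R\}$, and Theorem~A(ii) shows $\phi$ is an isometry, proving (ii).

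For (iii), if $X$ is a topological manifold then $p$ is a manifold point, forcing $\Sigma_p\cong\mathbb S^{n-1}$. When $R=\pi/\sqrt\kappa$ the bottom collapses to a single point (the second pole of the spherical suspension) and $X\cong\mathbb S^n$. When $R\le\pi/(2\sqrt\kappa)$, $\bar C^R_\kappa(\mathbb S^{n-1})$ is a topological disk $\mathbb D^n$ and $X=\mathbb D^n/(x\sim\phi(x))$ for an isometric involution $\phi$ of $\mathbb S^{n-1}$; for the quotient to be a closed manifold $\phi$ must either be fixed-point-free --- hence conjugate to the antipodal map, giving $X\cong\mathbb{RP}^n$ --- or have a totally geodesic codimension-one fixed sphere, producing the double of $\mathbb D^n$ along that sphere, which is $\mathbb S^n$. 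The main obstacle throughout is step (i): specifically, identifying the intrinsic cone-ball distance with an honest distance in $X$ through interior limits, which is where the first conclusion is used nontrivially; once (i) is in hand, (ii) and (iii) follow cleanly from Theorem~B and the classification of isometric involutions of the round sphere.
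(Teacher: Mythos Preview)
Your argument has a genuine circularity in step (i). You establish the first conclusion (the open-ball isometry $B_R(p)\cong C_\kappa^R(\Sigma_p)$) only when $\bar C_\kappa^R(\Sigma_p)$ is itself Alexandrov, i.e.\ precisely when $\kappa\le 0$, or $\kappa>0$ with $R\le\pi/(2\sqrt\kappa)$ or $R=\pi/\sqrt\kappa$. You then try to rule out the intermediate range $\pi/(2\sqrt\kappa)<R<\pi/\sqrt\kappa$ by invoking ``the first conclusion applied at intermediate radii $R'<R$'' and letting $R'\to R^-$. But for $R'$ close to $R$ you are still in the forbidden range, so the first conclusion is not available; if instead you restrict to $R'\le\pi/(2\sqrt\kappa)$ you only get the isometry of the \emph{smaller} ball $B_{R'}(p)\cong C_\kappa^{R'}(\Sigma_p)$, and the limit $R'\to R^-$ makes no sense. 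There is also a direction-of-inequality problem: even granting the open-ball isometry for radius $R$, the \emph{intrinsic} distance in $B_R(p)$ dominates the ambient $X$-distance, so the quantity you compute from the cone side is an \emph{upper} bound for $|A B|_X$, not a lower bound --- you end up with two upper bounds (Toponogov and the cone arc) and no contradiction. A smaller issue: you assume you can choose $|q_1q_2|_{\Sigma_p}$ arbitrarily close to $\pi$, but nothing forces $\operatorname{diam}(\Sigma_p)=\pi$.

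The paper dissolves the circularity by a single observation you are missing: the proof of Lemma~\ref{thmA.int.iso} (the internal-isometry lemma underlying Theorem~B) uses only the \emph{local} Alexandrov structure of the domain. Since the open cone ball $C_\kappa^R(\Sigma_p)$ is an open subset of the spherical suspension $C_\kappa(\Sigma_p)\in\Alexnk$, it is locally Alexandrov for \emph{every} $R>0$, and the interior-isometry argument goes through unconditionally to give the first conclusion. The same local reasoning lets the full machinery of Theorem~\ref{main.thm} run on the closed cone ball, exhibiting $X$ as a self-gluing along $\Sigma_p\times\{R\}$ regardless of $R$; item (i) is then read off from this gluing structure together with the fact (Lemma~2.6 of \cite{LR10}) that the two radial geodesics through a glued point must concatenate to a local geodesic in $X$. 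For the bound $m_0\le 2$ in (ii) the paper uses this same geodesic-concatenation argument, whereas your volume-of-directions argument at regular bottom points is a legitimate alternative once the gluing structure is in hand. (One further technical point: your $\mathrm{gexp}_p(\xi,t)=\gamma_\xi(t)$ via \emph{minimal} geodesics is not a priori defined for all $t\le R$; the paper uses the gradient exponential of \cite{Pet07}, which is always defined and $1$-Lipschitz.)
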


The space $\Sigma_p\in\Alex^{n-1}(1)$ is relatively determined by the space $X$ since it is the space of direction of $X$ at $p$. The statements (1) and (2) were proved in \cite{LR10} using a technique that heavily relies on the cone structures. Here we give a direct proof using Theorem \ref{main.thm} and provide more details for the proof of (3). In \cite{GP92}, Grove and Petersen proved a stronger result assuming that $X$ is a limit of Riemannian manifolds that satisfies $\vol X=\vol{B_R(o,\Bbb S_\kappa^n)}$. The case $X\in\Alexnk$ with $\vol X= \vol{B_R(o,\Bbb S_\kappa^n)}$ was studied in \cite{Sh}.

To prove Theorem \ref{rel.max}, we need the following two lemmas from \cite{Br}.
\begin{lem}[Theorem 5.5, p.132 in \cite{Br}]\label{br5.5}
  Let $K$ be a $G$-manifold, where $G$ is a finite group. Assume that for a given
prime $p$, all $p$-subgroups $P\subseteq G$ satisfies that
$$H_i(K^P;\Bbb Z_p)=0,\qquad 1\le i\le q \text{ (including $P=\{e\})$}.$$
Then $H_i(K/G;\Bbb Z_p)=0$ for
all $1\le i\le k$. Moreover, if this holds for all prime $p$ and $H_i(K;\Bbb Z)=0$ for $1\le i\le k$, then $H_i(K/G;\Bbb Z)=0$ for $1\le i\le k$.
\end{lem}

\begin{lem}[Exercise 3, p.167 in \cite{Br}]\label{brexe}
  Let $K$ be a finite-dimensional regular $G$-complex, where $G$ is a cyclic group of prime order $p$. Suppose that $K$ is a mod $p$ homology $n$-sphere and that the set of fixed points $K^G$ is a mod $p$ homology $k$-sphere. Then
  $$H_i(K/G;\Bbb Z_p)
    =\begin{cases}
      \Bbb Z_p \quad \text{for} \quad i=0;
      \\
      \Bbb Z_p \quad \text{for} \quad k+2\le i\le n;
      \\
      0 \quad \text{otherwise}.
    \end{cases}
  $$
\end{lem}

\begin{proof}[\bf Proof of Theorem \ref{rel.max}]
By Lemma 4.3 in \cite{LR10}, we see that if the equality holds, then $\vol{B_R(p)}=\vol{C_\kappa^R(\Sigma_p)}$ and the gradient exponential map (\cite{Pet07}) $g\exp_p:C_\kappa^R(\Sigma_p)\to B_R(p)$ is 1-Lipschitz onto. Note that even though $\bar C_\kappa^R(\Sigma_p)$ may not be an Alexandrov space, the cone structure and $\Sigma_p\in\Alex^{n-1}(1)$ guarantee that the proof of Lemma \ref{thmC.int.iso} also applies. Thus we have that $g\exp_p|_{\intl{C_\kappa^R(\Sigma_p)}}$ is an isometry. By the volume condition, the points $a, b\in \bar C_\kappa^R(\Sigma_p)$ can not be glued if $|O_pa|<|O_pb|$, otherwise $\vol{B_{|O_pa|}(p)}>\vol {C_\kappa^{|O_pa|}(\Sigma_p)}$. Consequently, for any $a\in\bar C_\kappa^R(\Sigma_p)$, $g\exp(\geod{O_pa})$ is a geodesic and $d(p,g\exp_p(a))=|O_pa|$.

Now we show that the gluing only happens along $\Sigma_p\times\{R\}$. Suppose that $a,b\in C_\kappa^R(\Sigma_p)$ are glued, but $\geod{Oa}$ and $\geod{Ob}$ are not glued with each other, that is, $g\exp_p(a)=g\exp_p(b)=x$, but $g\exp_p(\geod{Oa})\neq g\exp_p(\geod{Ob})$. Because $|Oa|=|Ob|<R$, we can extend the geodesic $\geod{Oa}$ to a point $a'$ such that $|Oa'|=R$. By the above assertion, $g\exp_p(\geod{Oa'})$ is a geodesic in $X$ and $d(p,g\exp_p(a'))=R$. Note that $\lh{g\exp_p(\geod{O_pb})}+|xy|=|O_pb|+|aa'|=R$. Thus $g\exp_p(\geod{O_pb})\cup g\exp_p(\geod{aa'})$ is also a geodesic. We obtain a bifurcated geodesic at $x$. Now we get that $g\exp_p(\geod{O_pa})= g\exp_p(\geod{O_pb})$, but $\drn_{O_p}^a\neq\drn_{O_p}^b$. This can not happen in Alexandrov spaces.

By the same argument as in \cite{LR10} Lemma 2.6, we see that for any $a, b\in\Sigma_p\times\{R\}$ that satisfy $a\neq b$ and $g\exp_p(a)=g\exp_p(b)=q\in X$,  $g\exp_p\left(\geod{O_pa}\right) \cup g\exp_p\left(\geod{O_pb}\right)$ forms a local geodesic at $q$. Thus $G_Z^m=\varnothing$ for $m\ge 3$, and the gluing happens along an involution of $\Sigma_p\times\{R\}$, that is, every $a\in \Sigma_p\times\{R\}$ is glued with $\phi(a)$, where $\phi: \Sigma_p\times \{R\}\to\Sigma_p\times \{R\}$ is an involution.

In order to apply Theorem \ref{main.thm} and show that $g\exp_p$ preserves the lengths of paths along the boundary, we need to show that $\bar C_\kappa^R(\Sigma_p)$ is an Alexandrov space, which is a consequence of (1). For simplicity, we assume that $\kappa=1$, $\frac\pi2<R<\pi$ and $X\in\Alex^n(1)$. We claim that if $\frac\pi2<R<\pi$, then $\Sigma_p\times\{R\}$ must be identified as one point, which contradicts to that the maximum gluing number $m_0\le 2$. Suppose that $x,y\in g\exp_p(\Sigma_p\times\{R\})$, but $x\neq y$. Clearly $|px|=|py|=R$. Then $\ang{x}{p}{y}>\frac\pi2$ due to the triangle comparison and $\frac\pi2<R<\pi$. Let $\geod{xy}$ be a geodesic connecting $x$ and $y$ in $X$. By the first variation, there is a point $q\in\geod{xy}$ so that $|pq|>|px|=R$. This contradicts to that $X\subseteq\overline{B_R(p)}$.

Proof of (2). Now we can apply Theorem \ref{main.thm} and get that $g\exp_p$ is a path isometry. However, this does not immediately imply that $\phi$ is an isometry. One needs to show that the involution $\phi$ is continuous. Suppose that $a_i\to a$ and $\phi(a_i)=b_i\to b$. We shall prove $\phi(a)=b$. By the continuity of $g\exp_p$, we have $$g\exp_p(a)=\lim_{i\to\infty}g\exp_p(a_i) =\lim_{i\to\infty}g\exp_p(\phi(a_i))=g\exp_p(b).$$
We claim that either $a$ is $\phi$-fixed or $a\neq b$. In the first case, we have $a=b$ and thus $\phi(a)=a=b$. In the latter case, we also have $\phi(a)=b$ because the maximal gluing number is at most $2$. Now we prove the claim in the following two cases.

Case 1, passing to a subsequence, $a_i$ are all $\phi$-fixed points. We show that $a$ is also $\phi$-fixed. Let $x_i=g\exp_p(a_i)\to x=g\exp_p(a)$. Then there are two geodesics $\gamma^+$ and $\gamma^-$ from $p$ to $x$. Moreover, the opposite tangent vectors $v_x^+$, $v_x^-$ at $x$ form an angle $\pi$. For $i$ large, let $y_i^\pm\in\gamma^\pm$ such that $|py_i^\pm|=|xx_i|^{1/4}$. By a rescaling argument, we see that geodesics $\geod{x_iy_i^\pm}$ converge to geodesic $\gamma^\pm$ as $i\to \infty$. Because for each $x_i$, there is a unique geodesic from $p$ to $x_i$, we have $\ang{x_i}{y_i^+}{y_i^-} \le\ang{x_i}{p}{y_i^+}+\ang{x_i}{p}{y_i^-}\le \frac{\pi}{2}$, provided that $|py_i^\pm|$ are sufficiently small. Thus
$$\frac\pi2
  \ge\liminf_{i\to\infty}\ang{x_i}{y_i^+}{y_i^-}
  \ge |v_x^+v_x^-|_{\Sigma_x}=\pi,
$$
a contradiction.

Case 2, passing to a subsequence, $\phi(a_i)\neq a_i$ for all $i$. We show that $a\neq b$, if $a$ is not $\phi$-fixed. Suppose $a=b=s$. Let $x_i=g\exp_p(b_i)=g\exp_p(a_i)$ and $x=g\exp_p(s)$. Clearly $x_i\to x$ and geodesics $g\exp_p(\geod{O_pa_i})$ and $g\exp_p(\geod{O_pb_i})$ both converge to the geodesic $g\exp_p(\geod{Os})$. Choose points $a_i'\in g\exp_p(\geod{O_pa_i})$, $b_i'\in g\exp_p(\geod{O_pb_i})$, $y\in g\exp_p(\geod{O_ps})$ and $z\in g\exp_p(\geod{O_p\phi(s)})$ such that $\geod{x_ia_i'}\to\geod{xy}$, $\geod{x_ib_i'}\to\geod{xy}$ and $\geod{x_iz}\to\geod{xz}$. Because $g\exp_p(\geod{O_ps})$ and $g\exp_p(\geod{O_p\phi(s)})$ form a local geodesic at $x$, we can choose $y,z$ close to $x$ such that $\ang{x}{y}{z}=\pi$. By the construction, when $i$ large, we have
$$\left|\drn_{x_i}^{a_i'}\,\drn_{x_i}^z\right|_{\Sigma_{x_i}}>\pi-\epsilon
  \quad\text{and}\quad
  \left|\drn_{x_i}^{b_i'}\,\drn_{x_i}^z\right|_{\Sigma_{x_i}}>\pi-\epsilon.
$$
Note that $g\exp_p(\geod{O_pa_i})\cup g\exp_p(\geod{O_pb_i})$ also forms a local geodesic at $x_i$. We have $|\drn_{x_i}^{a_i'}\,\drn_{x_i}^{b_i'}|_{\Sigma_{x_i}}=\pi$. Thus
$$\left|\drn_{x_i}^{a_i'}\,\drn_{x_i}^z\right|_{\Sigma_{x_i}}
  +\left|\drn_{x_i}^{b_i'}\,\drn_{x_i}^z\right|_{\Sigma_{x_i}}
  +\left|\drn_{x_i}^{a_i'}\,\drn_{x_i}^{b_i'}\right|_{\Sigma_{x_i}}
  >3\pi-2\epsilon.
$$
For $\epsilon>0$ small, this contracts to $\Sigma_{x_i}\in\Alex^{n-1}(1)$.

Now we prove that the above involution $\phi:\Sigma_p\times\{R\}\to\Sigma_p\times\{R\}$ is an isometry. Let $a_1, b_1\in\Sigma_p\times\{R\}$ and $a_2=\phi(a_1)$, $b_2=\phi(b_1)$. Let $\gamma\subset \Sigma_p\times\{R\}$ be a minimizing geodesic connecting $a_1$ and $b_1$ with respect to the intrinsic metric of $\Sigma_p\times\{R\}$. Since $\phi$ is continuous, $\phi(\gamma)\subset \Sigma_p\times\{R\}$ is a curve connecting $a_2$ and $b_2$. Clearly $g\exp_p(\gamma)=g\exp_p(\phi(\gamma))$. By Theorem \ref{main.thm}, $g\exp_p$ is a path isometry. Thus,
$$|a_1b_1|_{\Sigma_p\times\{R\}}=\lh\gamma=\lh{g\exp_p(\gamma)} =\lh{g\exp_p(\phi(\gamma))}=\lh{\phi(\gamma)}\ge|a_2b_2|_{\Sigma_p\times\{R\}}.$$ Similarly, $|a_2b_2|_{\Sigma_p\times\{R\}}\ge|a_1b_1|_{\Sigma_p\times\{R\}}$. Thus $$|a_1b_1|_{\Sigma_p\times\{R\}}=|a_2b_2|_{\Sigma_p\times\{R\}} =|\phi(a_1)\phi(b_1)|_{\Sigma_p\times\{R\}}.$$



(3) As proved in \cite{LR10}, $X$ can be written as a $\Bbb Z_2$-quotient of $K$, where $K$ is constructed via a gluing of two copies of $\bar C_\kappa^R(\Sigma_p)$ induced by the involution $\phi$. It was shown in \cite{LR10} that $K$ is homeomorphic to the sphere $\Bbb S^n$. If the $\Bbb Z_2$-action is free, then $X=K/\Bbb Z_2$ is homotopy equivalent to $\Bbb{RP}^n$. If the set of the fixed points is not empty, then $X$ is a simply connected topological manifold. This is because the induced map, $\pi_1(K)\to \pi_1(X)$ is onto. If $n\le 3$, then $X$ is homeomorphic to $\Bbb S^n$ by Poincar\'e conjecture.

Assume that $n\ge 4$. Let $k$ be the dimension of the set of $\Bbb Z_2$-fixed points. Clearly, $k\le n-2$, otherwise $X$ has a non-empty boundary. By Smith theorem, the $\Bbb Z_2$-fixed point set $K^{\Bbb Z_2}$ is a $\Bbb Z_2$-homology sphere. Then by Lemma \ref{brexe} and Poincar\'e duality ($X$ is a manifold), we get that $k+2=n$, that is, $X$ is a mod 2 homology sphere.

We claim that $X$ is an integral homology sphere. If $n\ge 5$, then $k=n-2>\frac n2$. By Lemma \ref{br5.5} and Poincar\'e duality, we obtain that $H_i(X;\Bbb Z)=0$ for all $1\le i\le n-1$. When $n=4$, by the same argument, we get that $H_i(X;\Bbb Z)=0$ for $i=1,3$. Because $H_1(X;\Bbb Z)=0$, by the Universal coefficient theorem, $H_2(X;\Bbb Z)\otimes\Bbb Z_2=H_2(X;\Bbb Z_2)=0$. Using $H_1(X;\Bbb Z)=0$ again and by Poincar\'e duality, we get that the torsion subgroup of $H_2(X;\Bbb Z)$ is trivial. Thus $H_2(X;\Bbb Z)=0$.

Since $X$ is simply connected, by Hurewicz theorem, an integral homology sphere is a homotopy sphere. Thus $X$ is homeomorphic to a sphere by Poincar\'e conjecture.
\end{proof}

Using Perel'man's stability theorem, Theorems \ref{shrink.spd} and \ref{rel.max}, we get the following stability theorem, which generalizes the result in \cite{LR10} without assuming that $X$ is a topological manifold.

\begin{thm}[Stability of relatively maximum volume]\label{almost.rel.max}
  Let $\Sigma\in\Alex^{n-1}(1)$ and $n, \kappa, R>0$. There exists a constant $$\epsilon=\epsilon(\Sigma_p,n,\kappa,R)>0$$
  such that if $X\in\Alexnk$ satisfies $X=\overline{B_R(p)}$, $\Sigma_p=\Sigma$ and $\vol X>
  \vol{\bar C_{\kappa}^R(\Sigma)}-\epsilon$ for some $p\in X$, then $X$ is homeomorphic to a self-glued space $\bar C^R_\kappa(\Sigma)/(x\sim \phi(x))$,
  where $\phi: \Sigma\times \{R\}\to\Sigma\times \{R\}$ is an isometric
  involution. In particular, if $X$ is a topological manifold, then $X$ is homeomorphic to $\Bbb S^n$ or homotopy equivalent to $\Bbb{RP}^n$.
\end{thm}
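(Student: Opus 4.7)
\medskip

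\noindent\textbf{Proof Proposal.} The plan is to argue by contradiction using Gromov compactness, and then to identify the limit using the equality case of Bishop--Gromov together with Theorem \ref{rel.max}. Suppose the theorem fails for some fixed $\Sigma_p$. Then there is a sequence $X_i = \bar B_R(p_i) \in \Alexnk$ with $\Sigma_{p_i}$ isometric to $\Sigma_p$ and $\vol{X_i} \to \vol{\bar C_\kappa^R(\Sigma_p)}$, but no $X_i$ is homeomorphic to any self-gluing $\bar C_\kappa^R(\Sigma_p)/(x\sim\phi(x))$ with $\phi$ an isometric involution. Since $\diam(X_i)\le 2R$ and $\vol{X_i}$ stays bounded away from $0$, Gromov's compactness theorem provides a subsequence with $(X_i,p_i)\begin{CD}@>d_{GH}>{\,}>\end{CD}(X_\infty,p_\infty)$, and the limit is non-collapsing in $\Alexnk$ of dimension $n$. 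Clearly $X_\infty=\bar B_R(p_\infty)$, and by continuity of volume under non-collapsing convergence,
$$\vol{X_\infty}=\lim_{i\to\infty}\vol{X_i}=\vol{\bar C_\kappa^R(\Sigma_p)}.$$

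Next I would pin down the space of directions at $p_\infty$. By Theorem \ref{bgp7.14}, passing to a further subsequence so that $\Sigma_{p_i}\begin{CD}@>d_{GH}>{\,}>\end{CD}\Sigma$, there is a $1$-Lipschitz onto map $\Sigma\to \Sigma_{p_\infty}$. Since all $\Sigma_{p_i}$ are isometric to $\Sigma_p$, we have $\Sigma=\Sigma_p$, hence $\vol{\Sigma_{p_\infty}}\le \vol{\Sigma_p}$ and therefore $\vol{\bar C_\kappa^R(\Sigma_{p_\infty})}\le\vol{\bar C_\kappa^R(\Sigma_p)}$. Combining with Bishop--Gromov applied to $X_\infty$,
$$\vol{\bar C_\kappa^R(\Sigma_p)}=\vol{X_\infty}\le \vol{\bar C_\kappa^R(\Sigma_{p_\infty})}\le \vol{\bar C_\kappa^R(\Sigma_p)},$$
so all the inequalities are equalities. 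In particular $\vol{\Sigma_{p_\infty}}=\vol{\Sigma_p}$, which by the Shrinking rigidity of spaces of directions (Theorem \ref{shrink.spd}) forces $\Sigma_{p_i}\to \Sigma_{p_\infty}$, hence $\Sigma_{p_\infty}$ is isometric to $\Sigma_p$. Moreover the chain of equalities gives the equality case in Bishop--Gromov volume comparison for $X_\infty$, so Theorem \ref{rel.max} applies and $X_\infty$ is isometric to a self-glued space $\bar C_\kappa^R(\Sigma_p)/(x\sim\phi(x))$ for some isometric involution $\phi:\Sigma_p\times\{R\}\to\Sigma_p\times\{R\}$.

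Finally I would invoke Perel'man's Stability Theorem: since $X_i\to X_\infty$ is a non-collapsing convergence of compact Alexandrov spaces with the same lower curvature bound, $X_i$ is homeomorphic to $X_\infty$ for all large $i$. This contradicts the choice of the sequence and proves the first assertion. The ``topological manifold'' addendum is immediate: if $X$ is a topological manifold, then so is the model $\bar C_\kappa^R(\Sigma_p)/(x\sim\phi(x))$ to which it is homeomorphic, and Theorem \ref{rel.max}(iii) identifies it with $\mathbb S_1^n$ or $\mathbb{RP}^n$.

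The main obstacle I anticipate is the step $\vol{\Sigma_{p_\infty}}=\vol{\Sigma_p}$: one has to make sure the chain of inequalities above is set up correctly so that equality in the Bishop--Gromov comparison on $X_\infty$ and equality of the volumes of spaces of directions both hold simultaneously. The use of Theorem \ref{shrink.spd} is what allows the semi-continuous $\Sigma\to\Sigma_{p_\infty}$ from Theorem \ref{bgp7.14} to be upgraded to a genuine convergence, and this upgrade is what couples the volume equality to an actual identification $\Sigma_{p_\infty}\cong\Sigma_p$, enabling the application of Theorem \ref{rel.max} with the same fixed model $\bar C_\kappa^R(\Sigma_p)$.
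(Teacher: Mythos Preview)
Your proposal is correct and follows essentially the same route as the paper's proof: a contradiction/compactness argument, the chain of inequalities $\vol{\bar C_\kappa^R(\Sigma_p)}=\vol{X_\infty}\le\vol{\bar C_\kappa^R(\Sigma_{p_\infty})}\le\vol{\bar C_\kappa^R(\Sigma_p)}$ via Theorem \ref{bgp7.14} and Bishop--Gromov, then Theorem \ref{shrink.spd} to identify $\Sigma_{p_\infty}\cong\Sigma_p$, Theorem \ref{rel.max} for the rigidity of the limit, and finally Perel'man's Stability Theorem. The only cosmetic difference is that the paper applies Theorem \ref{rel.max} with $\Sigma_{\bar p}$ first and then identifies $\Sigma_{\bar p}\cong\Sigma_p$, whereas you make the identification first; neither order affects the argument.
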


\begin{proof}
  Let $(X_i,p_i)\in\Alexnk$ be a Gromov-Hausdorff convergent sequence that satisfies $X_i=\overline{B_R(p_i)}$ and $\Sigma_{p_i}=\Sigma$ for all $i$. Suppose that $\dsp\lim_{i\to\infty}\vol{X_i}
  =\vol{\bar C_\kappa^R(\Sigma_p)}$. Let $(X, p)$ be the limit space of $(X_i,p_i)$. Then $X=\overline{B_R(p)}$ and $\vol{X}=\vol{\bar C_\kappa^R(\Sigma)}$. Because there is a distance non-decreasing map $g\colon\Sigma_{p}\to\Sigma$, we have
  \begin{align}
    \vol{\bar C_{\kappa}^R(\Sigma_{p})}
    \le\vol{\bar C_{\kappa}^R(\Sigma)}
    =\vol X
    \le\vol{\bar C_{\kappa}^R(\Sigma_{p})}.
    \label{pf.thmD.e1}
  \end{align}
  Thus $\vol{X}=\vol{\bar C_{\kappa}^R(\Sigma_{p})}$. By Theorem \ref{rel.max}, $(X, p)$ is isometric to a self-glued space $\bar C^R_\kappa(\Sigma_{p})/(x\sim \phi(x))$, where $\phi: \Sigma_{p}\times \{R\}\to\Sigma_{p}\times \{R\}$ is an isometric involution. By (\ref{pf.thmD.e1}) again, we see that $\vol{\Sigma_{p}}=\vol{\Sigma}$. Thus $\Sigma_{p}$ is isometric to $\Sigma$ by Theorem \ref{shrink.spd}. Then the theorem follows from Perel'man's stability theorem.
\end{proof}


%

\vskip 30mm

\bibliographystyle{amsalpha}


\end{document}